\documentclass[a4paper,12pt]{article}%dropping 12 pt it uses 11pt 

\usepackage[latin1]{inputenc}
\usepackage{natbib}
\usepackage{amsmath}
\usepackage{amsthm}% amsthm must be loaded after amsmath, not before; it allows the command \begin{proof} 
\usepackage{amssymb} % allow the use set symbol \mathbb{P,N,Z,I,Q,R,C} prime,natural,integer,irrational,rational, real, complex
\usepackage{stmaryrd} % it is necessary for the symbols $\ovee$ $\varovee$ $\varowedge$ of symmetric maximum and 
\usepackage{ctable}
\usepackage{MnSymbol}
\usepackage{natbib}
\usepackage{mathtools}
\usepackage{endnotes} 
\usepackage{setspace}

\newcommand{\rea}{\mathbb{R}}
\newcommand{\raz}{\mathbb{Q}}
\newcommand{\nat}{\mathbb{N}}

\newcommand{\lf}{\underline{x}}
\newcommand{\rg}{\overline{x}}
\newcommand{\x}{\textbf{\textit{x}}}
\newcommand{\y}{\textbf{\textit{y}}}
\newcommand{\z}{\textbf{\textit{z}}}

\DeclarePairedDelimiter{\abs}{\lvert}{\rvert}

\newtheorem{Def}{Definition}
\newtheorem{Theo}{Theorem}
\newtheorem{cor}{Corollary}
\newtheorem{rem}{Remark}
\newtheorem{pro}{Proposition}
 % theorem without numeration
\newtheorem{lem}{Lemma}

\setlength{\textwidth}{18 cm} 
\setlength{\textheight}{25cm}

\setlength{\oddsidemargin}{- 1cm}
\setlength{\evensidemargin}{-1 cm}
\setlength{\headheight}{-1.5cm}

%\doublespace
%\setstretch{1.667} 

\begin{document}

\title {Robust Integrals} 
%\title{Fuzzy integrals of interval-evaluated alternatives} 

\author{Salvatore Greco and Fabio Rindone \\ 
\small{\textit{Department of Economics and Business,}}\\
\small{\textit{University of Catania, Corso Italia 55, 95129 Catania, Italy}}\\
\small{E-mail: salgreco@unict.it, frindone@unict.it}
%\\
%Fabio Rindone \\ 
%\small{\textit{Department of Economics and Business,}}\\
%\small{\textit{University of Catania, Corso Italia 55, 95129 Catania, Italy}}\\
%\small{E-mail: {frindone@unict.it}}
}
\date{}
\maketitle

\begin{abstract}
%\noindent
In decision analysis and especially in multiple criteria decision analysis, several non additive integrals have been introduced in the last years. 
Among them, we remember the Choquet integral, the Shilkret integral and the Sugeno integral. 
In the context of multiple criteria decision analysis, these integrals are used to aggregate the evaluations of possible choice alternatives, with respect to several criteria, into a single overall evaluation. 
These integrals request the starting evaluations to be expressed in terms of \textit{exact-evaluations}.
In this paper we present the \textit{robust} Choquet, Shilkret and Sugeno integrals, computed with respect to an \textit{interval capacity}.
These are quite natural generalizations of the Choquet, Shilkret and Sugeno integrals, useful to aggregate \textit{interval-evaluations} of choice alternatives into a single overall evaluation. 
We show that, when the interval-evaluations collapse into exact-evaluations, our definitions of robust integrals collapse into the previous definitions.
We also provide an axiomatic characterization of the robust Choquet integral.
\end{abstract}

\textbf{Key words}: Choquet, Shilkret and Sugeno integral; interval evaluations; interval capacity.

\section{Introduction}
In many decision problems a set of alternatives is evaluated with respect to a set of points of view, called criteria. 
For example, in evaluating a car one can consider criteria such as maximum speed, price, acceleration, fuel consumption.
In evaluating a set of students one can consider as criteria the notes in examinations with respect to different subjects such as Mathematics, Physics, Literature and so on. 
In general, evaluations of an alternative with respect to different criteria can be conflicting with respect to preferences. For example, very often when a car has a good maximum speed, it has also a high price and a high fuel consumption, or if a student is very good in Mathematics, may be not so good in Literature. 
Thus, in order to express a decision such as a choice from a given set of cars or a ranking of a set of students,
it is necessary to aggregate the evaluations on considered criteria, taking into account the possible interactions. 
This is the domain of multiple criteria decision analysis and in this context several methodologies have been proposed (for a collection of extensive state-of-art surveys see \cite{figueira2005multiple}).
Suppose to have n criteria $N=\{1,\ldots,n\}$ and that on each of them the evaluation of a given alternative $\x$ is expressed by a single number (on the same scale).  
Thus, such an alternative can be identified with a score vector 
$\x=(x_1,\ldots,x_n)$, where $x_i\in \rea$ represents the evaluation of $\x$ with respect to the $i^{th}$ criterion. 
If the criteria are independent, a natural way to aggregate the $x_i$ is using the weighted arithmetic means 
$E_w(\x)=\sum_1^nw_ix_i$ with $\sum_1^nw_i=1$ and $w_i\ge 0$.
When the criteria are interacting the weighted arithmetic means must be substituted with non additive operators.
In the last years, several non additive integrals have been developed in order to obtain an aggregated evaluation of $\x$, say $E(\x)$ (for a comprehensive survey see \cite{grabisch2005fuzzy}).
Among them we remember the Choquet integral \cite{choquet1953theory}, the Shilkret integral \cite{shilkret1971maxitive} and the Sugeno integral \cite{sugeno1974theory}.
All these integrals are computed with respect to a capacity \cite{choquet1953theory} or fuzzy measure \cite{sugeno1974theory} allowing the  importance of a set of criteria to be not necessarily the sum of the importance of each criterion in the set. 
It can be smaller or greater, due respectively to redundancy or synergy among criteria. 
These integrals can be used if the starting evaluations are exactly expressed (on a numerical or ordinal scale).
However, in the real life it is very simple to image situations where we have only partial informations about the possible evaluations on each criterion. 
Specifically, on this paper we face the case of \textit{interval-evaluations}.
For example, suppose a situation where, considering only two criteria, 
an alternative $\x$ is evaluated between 5 and 10 on on the first criterion and between 7 and 20 on the second.
Again $\x$ can be represented as a score vector  
$\x=\left(\left[5,10\right],\left[7,20\right]\right)$.
Using a generic aggregation operator $E$, it seems natural to aggregate separately the $\x$ "pessimistic" evaluations 
$\x_*=\left(5,7\right)$ and the 
"optimistic" ones, 
$\x^*=\left(10,20\right)$, in order to obtain an interval 
$[E(\x_*),E(\x^*)]$
containing the global evaluation of $\x$.
If we wish to obtain such a global evaluation, we should furthermore aggregate 
$E(\x_*)$ and $E(\x^*)$ 
into a single number.
Thus, the aggregation of interval evaluations into an exact evaluation should necessarily request two steps.
In this paper we aim to synthesize these two processes into one single aggregating process. 
To this purpose we provide a quite natural generalization of the classical Choquet, Shilkret and Sugeno integrals, which we call the \textit{robust} Choquet, Shilkret and Sugeno integrals computed wit respect to an \textit{interval capacity}.
Roughly speaking, our integrals are special case of integrals of set valued functions \cite{aumann1965integrals}.
Another question we face is that of order on the set of intervals. 
It is well known that the philosophy of the Choquet integral applied to a given alternative is based on the ranking of the alternative evaluations on the various criteria.
Being these evaluations single numbers, their ranking agrees with the natural order of $\rea$. 
In the case of interval evaluations, we have not a ``natural order'' to be preserved, like in $\rea$.
On the other hand we want that an evaluation on the range $[5,10]$ is considered better than an evaluation on the range $[1,4]$ and, then, some assumption about a primitive ordering on intervals must be done.
One choice could be to assume the lexicographic order: $[a,b]\prec [a',b']$ iff $a<a'$ or $a=a'$ and $b<b'$.
The lexicographic order has the advantage to be a complete order, but it leads to the conclusion that $[2.99,100]\prec [3,4]$, which we do not consider a suitable conclusion in the case of interval evaluations.
Instead, through this paper we shall assume as desirable order on intervals to be preserved that defined by considering an evaluation on the range 
$[a,b]$ 
better or equal than an evaluation on the range $[a',b']$ iff 
$a\ge a'$ and $b\ge b'$.\\
Finally, we wish to remember as in contrast to the fact that in real life decisions we often face imprecise evaluations, 
in multiple criteria decision analysis little has been developed in order to provide appropriate tools to aggregate such evaluations.
In the best of our knowledge this question has been only partially treated in \cite{jang2004interval, bustince2012multicriteria}.
\\
The paper is organized as follows.
Section 2 contains the basic concepts.
In section 3 we give the definition of Robust Choquet Integral (RCI) computed with respect to an interval capacity.
In section 4 we give an illustrative application of the RCI, 
while in section 5 we provide a full axiomatic characterization of this integral. 
In section 6 we explore the possibility of rewriting the RCI by means of its M{\"o}bius inverse.   
In section 7 we give the definitions of robust Sugeno and Shilkret integrals and in section 8 we apply our generalization to other fuzzy integrals, among them to the concave integral of Lehrer \cite{lehrer2008concave}.
In section 9 we extend our discussion to the case of m-point intervals \cite{ozturk2011representing}.
In section 10 we present our conclusions.

\section{Basic concepts}
Let us consider a set of \textit{alternative}
$A=\left\{\x,\textit{\textbf{y}},\textit{\textbf{z}},\ldots \right\}$
to be evaluated with respect to a set of criteria  
$N=\left\{1,\cdots,n\right\}$.
Suppose that for every $\x\in A$, we have, on each criterion, a numerical imprecise evaluation. 
Specifically, suppose that for each $i\in N$ we know a range $\left[\lf _i,\rg _i\right]$ containing the exact evaluation of $\x$ with respect to $i$. 
Thus, being 
$\mathcal I=\left\{[a,b]\ |\  a,b\in\rea,\  a\le b\right\}$ the set of bounded and closed intervals of $\rea$,  
any alternative $\x$ can be identified with a score vector  
\begin{equation}
\x=\left(\left[\lf _1,\rg _1\right],\ldots,\ \left[\lf _i,\rg _i\right],\ldots,\left[\lf _n,\rg _n\right]\right)\in\mathcal I^n
\label{eq:vector}
\end{equation}
whose $i^{th}$ component, 
$[\lf_i,\rg_i]$, 
is the interval containing the evaluation of $\x$ with respect to the $i^{th}$ criterion. 
Vectors of $\rea^n$ are considered elements of $\mathcal I^n$ by identifying each $x\in\rea$ with the degenerate interval (or singleton) $[x,x]=\{x\}$.
Thus, with a slight abuse of notation, we write $[x,x]=x$.
We associate to every 
$\x=\left(\left[\lf _1,\rg _1\right],\ldots,\left[\lf _n,\rg _n\right]\right)\in \mathcal I^n$ the vector 
$\underline{\x}=\left(\lf_1,\ldots,\lf_n\right)$ 
of all the worst (or pessimistic) evaluations of $\x$ on each criterion and 
the vector 
$\overline{\x}=\left(\rg_1,\ldots,\rg_n\right)$ 
of all the best (or optimistic) evaluations of $\x$ on each criterion. 
Trough the paper, the elements of $\mathcal I^n$ will be, indifferently, called alternatives or vectors. 
\\
Let us consider the set  $\mathcal Q=\left\{(A,B)\ |\ A\subseteq B\subseteq N \right\}$ of all pairs of subsets of $N$ in which the first component is included in the second. 
With a slight abuse of notation we extend to $\mathcal Q$ the set relations of inclusion, union and intersection by defining 
for all $(A,B), (C,D) \in \mathcal Q$:
\[
(A,B)\subseteq (C,D)\ \textnormal{if and only if}\ A \subseteq C \ \textnormal{and}\ B\subseteq D,
\]
\[
\left(A,B\right)\cup\left(C,D\right)=\left(A\cup C, B\cup D\right),
\]
\[
\left(A,B\right)\cap\left(C,D\right)=\left(A\cap C, B\cap D\right).
\]
Regarding the algebraic structure of $\mathcal Q$, we can observe that with respect to the relation $\subseteq$, $\mathcal Q$ is a lattice, i.e. a partial ordered set in which every two elements have a unique supremum and a unique infimum.
Those are given, for all $(A,B), (C,D) \in \mathcal Q$, respectively, by 
$$
\sup\left\{\left(A,B\right),\left(C,D\right)\right\}=\left(A,B\right)\cup\left(C,D\right),
$$ 
$$
\inf\left\{\left(A,B\right),\left(C,D\right)\right\}=\left(A,B\right)\cap\left(C,D\right).
$$
Moreover the lattice $\left(\mathcal Q, \subseteq\right)$ is also distributive. 
Indeed, due to the distributive property of set union over intersection (and vice versa) we have that 
\[
\left(A,B\right)\cup \left[\left(C,D\right)\cap\left(E,F\right)\right]=
\left[\left(A,B\right)\cup\left(C,D\right)\right]\cap\left[\left(A,B\right)\cup\left(E,F\right)\right],
\]
\[
\left(A,B\right)\cap \left[\left(C,D\right)\cup\left(E,F\right)\right]=
\left[\left(A,B\right)\cap\left(C,D\right)\right]\cup\left[\left(A,B\right)\cap\left(E,F\right)\right].
\]
Regarding the significance of $\mathcal Q$ in this work, let us consider  
$\x=\left(\left[\lf _1,\rg _1\right],\ldots,\left[\lf _n,\rg _n\right]\right)\in\mathcal I^n$
and a fixed evaluation level $t\in\rea$. 
In the pair 
\[
(A_t,B_t)=(\{i \in N\  |\  \lf_i \geq t \},\{i \in N\ |\ \rg_i\geq t \}),
\]
$A_t$ aggregates the criteria whose pessimistic evaluation of $\x$ is at least $t$, while $B_t$ aggregates the criteria whose optimistic evaluation of $\x$ is at least $t$. 
Clearly, $A_t\subseteq B_t\subseteq N$ and thus $(A_t,B_t)\in \mathcal Q$ for all $t\in \rea$. 
We aim to define a tool allowing for the assignment of a ``weight'' to such elements of $\mathcal Q$.

\section{The robust Choquet integral}

\begin{Def} 
A function $\mu_r: \mathcal Q \rightarrow [0,1]$ is an \textit{interval-capacity} on $\mathcal Q$  if
\begin{itemize}
	\item $\mu_r(\emptyset,  \emptyset)=0$ and $\mu_r(N,N)=1$;
	\item $\mu_r(A,B) \leq \mu_r(C,D)$ for all 
		$(A,B), (C,D) \in \mathcal Q$ such that $(A, B) \subseteq (C,D).$
\end{itemize}
\label{def:interval capacity}
\end{Def}

\begin{Def}
The Robust Choquet Integral (RCI) of 
$\x=\left(\left[\lf _1,\rg _1\right],\ldots,\left[\lf _n,\rg _n\right]\right)\in \mathcal I^n$ with respect to an interval capacity $\mu_r:\mathcal Q\rightarrow [0,1]$ is given by:
\begin{equation}
Ch_r\left(\x,\mu_r\right)=: 
\int_{\min\left\{\lf_1,\ldots,\lf_n\right\}}^{\max\left\{\rg_1,\ldots,\rg_n\right\}} \mu_r(\{i \in N\  |\  \lf_i \geq t \},\{i \in N\ |\ \rg_i\geq t \})dt\ +\ \min\left\{\lf_1,\ldots,\lf_n\right\}.
\label{eq:RCI}
\end{equation}
\end{Def}
\noindent
Note that, being in the \eqref{eq:RCI} the integrand bounded and not increasing, the integral is the standard Riemann integral.\\
An alternative formulation of the RCI implies some additional notations. 
We identify every vector  
$\x=\left(\left[\lf _1,\rg _1\right],\ldots,\left[\lf _n,\rg _n\right]\right)\in \mathcal I^n$ 
with the vector 
$\x^*=(x_1,\ldots,x_{2n})\in\rea^{2n}$ 
defined by setting for all $i=1,\ldots, 2n$:  

\begin{equation}
x_i=\left\{
\begin{array}{ll}
	\lf_i & i\leq n 
	\\
	\rg_{i-n} & i> n.
\end{array}
\right.
\label{eq:ident}
\end{equation}
This corresponds to identify $\x\in \mathcal I^n$ with 
$\x^*=(x_1,\ldots,x_{2n})=\left(\lf _1,\ldots, \lf_n,\rg_1,\ldots,\rg _n \right)\in \rea^{2n}$. 
Now, let 
$(\cdot):\left\{1,\ldots,2n\right\}\rightarrow \left\{1,\ldots,2n\right\}$
be a permutation of indices such that  
$x_{(1)}\leq x_{(2)}\leq\ldots\leq x_{(2n)}$
and for all $i=1,\ldots,2n$ let us define 
$A_{(i)}=\left\{j\in N\ |\ \lf_j\geq x_{(i)}\right\}$ and 
$B_{(i)}=\left\{j\in N\ |\ \rg_j\geq x_{(i)}\right\}$.
Thus, two alternative formulations of \eqref{eq:RCI} are:
\begin{equation}
Ch_r\left(\x,\mu_r\right) =\sum_{i=2}^{2n}{\left(x_{(i)}-x_{(i-1)}\right)\mu_r\left(A_{(i)},B_{(i)}\right)}\ + x_{(1)}
\label{eq:RCI1}
\end{equation}
and
\begin{equation}
Ch_r\left(\x,\mu_r\right)=
\sum_{i=1}^{2n}{x_{(i)}\left[\mu_r\left(A_{(i)},B_{(i)}\right)-\mu_r\left(A_{(i+1)},B_{(i+1)}\right)\right]}.  
\label{eq:RCI2}
\end{equation}

\subsection{Interpretation}
The indicator function of a set $A\subseteq N$ is the function 
$1_A:N\rightarrow \{0,1\}$ 
which attains 1 on $A$ and 0 elsewhere. 
Such a function can be identified with the vector 
$\textbf{1}_A\in\rea^n$ 
whose $i^{th}$ component equals 1 if $i\in A$ and equals 0 if $i\notin A$.
%We know that the Choquet integral of $\textbf{1}_A$ with respect to a capacity $\nu$, coincides with the capacity $\nu(A)$.
%In the same spirit, 
For all $\left(A,B\right)\in \mathcal Q$ the generalized indicator function $1_{\left(A,B\right)}:N\rightarrow\{0,1,[0,1]\}$ is defined by setting for all $i\in N$ 
\begin{equation}
1_{\left(A,B\right)}(i)=
\left\{
\begin{array}{lll}
[1,1]=1 &  i\in A \\
\left[0,1\right] &  i\in B\setminus A \\
\left[0,0\right]=0 &  i\in N\setminus B.
\end{array}\right. 
\label{eq:indicator}
\end{equation} 

\noindent The \eqref{eq:indicator} can be thought as the function indicating ``$A$ for sure and, eventually, $B\setminus A$.''
Clearly, if $A=B$, $1_{\left(A,A\right)}=1_A$. 
The function $1_{\left(A,B\right)}$ can be identified with the vector $\textbf{1}_{\left(A,B\right)}\in\mathcal I^n$ 
whose $i^{th}$ component equals $[1,1]=1$ if $i\in A$, 
equals $[0,1]$ if $i\in B\setminus A$ 
and equals 0 if $i\notin B$.\\
It follows by the definition of RCI that for any interval capacity $\mu_r$: 
\begin{equation}
Ch_r(\textbf{1}_{\left(A,B\right)},\mu_r)=\mu_r\left(A,B\right).
\label{eq:interpretation}
\end{equation}
This relation offers an appropriate definition of the weights $\mu_r\left(A,B\right)$. 
Indeed, provided that the partial score $[\lf_i,\rg_i]$ are contained in $[0,1]$, the \eqref{eq:interpretation} suggests that the weight of importance of any couple $(A,B)\in \mathcal Q$ is defined as the global evaluation of the alternative that 
\begin{itemize}
	\item completely satisfies the criteria from $A$,
	\item have an unknown degree of satisfaction (on the scale $[0,1]$) about the criteria from $B\setminus A$,
	\item totally fails to satisfy the criteria from $N\setminus B$.
\end{itemize}
%A remaining lack of interpretation, corresponds to the lack of information which we have on the criteria in $B\setminus A$.\\

\subsection{Relation with the Choquet Integral}
A capacity \cite{choquet1953theory} or fuzzy measure \cite{sugeno1974theory} on $N$ is a non decreasing set function $\nu:2^N\rightarrow [0,1]$ such that $\nu(\emptyset)=0$ and $\nu(N)=1$.
\begin{Def}
The Choquet integral \cite{choquet1953theory} of a vector $\textbf{\textit{x}}=\left(x_1, \ldots, x_n\right)\in  \left[0,+\infty\right.\left[\right.^n$ with respect to the capacity $\nu$ is given by
\begin{equation}
Ch(\textbf{\textit{x}},\mu)=\int_{0}^{\infty} \mu\left(\{i\in N: x_i \ge t\}\right) dt.
\label{choquet}
\end{equation}
\end{Def}

\noindent Schmeidler \cite{schmeidler1986integral} extended the above definition to negative values too.  

\begin{Def}
The Choquet integral \cite{schmeidler1986integral} of a vector 
$\textbf{\textit{x}}=\left(x_1, \ldots, x_n\right)\in \rea^n $ 
with respect to the capacity $\nu$ is given by
\begin{equation}
Ch(\textbf{\textit{x}},\nu)=\int_{\min_i x_i}^{\max_i x_i} \nu\left(\{i\in N: x_i \ge t\}\right) dt +\min_i x_i.
\label{choquetschmeidler}
\end{equation}
\end{Def}

\noindent Alternatively \eqref{choquetschmeidler} can be written as 
\begin{equation}
Ch(\textbf{\textit{x}},\nu)=\sum_{i=2}^n\left(x_{(i)}-x_{(i-1)}\right)\cdot\nu\left(\{j\in N: x_j \ge x_{(i)}\}\right)\ +\ x_{(1)},
\label{choquet1}
\end{equation}
being $():N\rightarrow N$ any permutation of indexes such that $x_{(1)}\le\ldots\le x_{(n)}$.\\
Now, suppose to have 
$\x=\left(\left[\lf _1,\rg _1\right],\ldots,\left[\lf _n,\rg _n\right]\right)\in \mathcal I^n$ 
such that $\lf_i=\rg_i$ for all $i\in N$, thus $\x\in\rea^n$.
Let be given an interval capacity $\mu_r: \mathcal Q \rightarrow [0,1]$.
It is straightforward to note that
$\nu(A)=\mu_r(A,A):2^N\rightarrow [0,1]$ defines a capacity.
In this case the RCI of $\x$ with respect to $\mu_r$ collapses on the Choquet integral of $\x$ with respect to $\nu$, i.e. $Ch_r(\x,\mu_r) =Ch(\x,\nu)$.\\
Moreover, the RCI is a monotonic functional (see section \ref{sec:characterization}) and then for all $\x\in\mathcal I^n$,
\begin{equation}
Ch_r(\underline\x,\mu_r)=Ch(\underline\x,\nu)\leq Ch_r(\x,\mu_r)\leq Ch(\overline{\x},\nu)=Ch_r(\overline\x,\mu_r).
\label{eq:boh}
\end{equation}
If $\mu_r(\emptyset, N)\notin\{0,1\}$, other two capacities can be elicited from $\mu_r$ by setting for all $A\subseteq N$
\[
\underline\nu(A)=\frac{\mu_r(A,N)-\mu_r(\emptyset,N)}{1-\mu_r(\emptyset,N)} \quad \textnormal{and}\quad 
\overline\nu(A)=\frac{\mu_r(\emptyset,A)}{\mu_r(\emptyset,N)}.  
\]
These two capacities naturally arise in the proof of proposition \ref{pro:separable}.\\
Now let us examine the relation between the Choquet integral and the RCI in the other verse.
Starting from two capacities, 
$\underline\nu:2^N\rightarrow [0,1]$ and 
$\overline\nu:2^N\rightarrow [0,1]$, 
we can define an interval capacity for every $\alpha\in (0,1)$ by means of  
\begin{equation}
\mu_r(A,B)=\alpha\underline{\nu}(A) + (1-\alpha)\overline{\nu}(B),\ \textnormal{for all}\ (A,B)\in\mathcal Q.
\label{eq:separable}
\end{equation}
\begin{Def}
An interval capacity $\mu_r(A,B): \mathcal Q \rightarrow [0,1]$ is said separable if there exist an $\alpha\in (0,1)$ and two capacities, 
$\underline\nu:2^N\rightarrow [0,1]$ and 
$\overline\nu:2^N\rightarrow [0,1]$, 
such that the \eqref{eq:separable} holds.
\end{Def}
\begin{pro}
An interval capacity $\mu_r(A,B): \mathcal Q \rightarrow [0,1]$ is separable if and only if for every 
$A,A',B,B'\in2^N$ with 
$A\cup A'\subseteq B\cap B'$ it holds the 
\begin{equation}
\mu_r(A,B)-\mu_r(A',B)=\mu_r(A,B')-\mu_r(A',B').
\label{eq:sep}
\end{equation} 
\label{pro:separable}
\end{pro}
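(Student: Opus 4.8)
The plan is to prove the two implications separately; the forward (``only if'') direction should be a direct substitution, while the converse rests on a single well-chosen instance of \eqref{eq:sep}. So suppose first that $\mu_r$ is separable, $\mu_r(A,B)=\alpha\underline\nu(A)+(1-\alpha)\overline\nu(B)$ for some $\alpha\in(0,1)$ and capacities $\underline\nu,\overline\nu$. For any admissible quadruple---the hypothesis $A\cup A'\subseteq B\cap B'$ guarantees that all four pairs lie in $\mathcal Q$---the terms in $\overline\nu(B)$ cancel, giving
\[
\mu_r(A,B)-\mu_r(A',B)=\alpha\bigl(\underline\nu(A)-\underline\nu(A')\bigr)=\mu_r(A,B')-\mu_r(A',B'),
\]
which is exactly \eqref{eq:sep}. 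This half needs no case analysis.

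For the converse my plan is to read additive separability straight off \eqref{eq:sep} by specializing to the extreme pair $A'=\emptyset$, $B'=N$. Since $(A,B)\in\mathcal Q$ means $A\subseteq B$, one has $A\cup\emptyset=A\subseteq B=B\cap N$, so this quadruple is admissible, and \eqref{eq:sep} then reads
\[
\mu_r(A,B)=\bigl[\mu_r(A,N)-\mu_r(\emptyset,N)\bigr]+\mu_r(\emptyset,B)
\]
for every $(A,B)\in\mathcal Q$. Thus $\mu_r$ splits as a function of $A$ alone plus a function of $B$ alone, and it only remains to rescale this into the required convex form.

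To finish I would set $\alpha=1-\mu_r(\emptyset,N)$ together with
\[
\underline\nu(A)=\frac{\mu_r(A,N)-\mu_r(\emptyset,N)}{1-\mu_r(\emptyset,N)},\qquad\overline\nu(B)=\frac{\mu_r(\emptyset,B)}{\mu_r(\emptyset,N)},
\]
so that substituting back reproduces the displayed decomposition, i.e. $\mu_r(A,B)=\alpha\underline\nu(A)+(1-\alpha)\overline\nu(B)$. I would then check that $\underline\nu$ and $\overline\nu$ are genuine capacities: the normalizations $\underline\nu(\emptyset)=\overline\nu(\emptyset)=0$, $\underline\nu(N)=\overline\nu(N)=1$ follow from $\mu_r(\emptyset,\emptyset)=0$ and $\mu_r(N,N)=1$, while the monotonicity of each is inherited from that of $\mu_r$ restricted to the slices $B=N$ and $A=\emptyset$ of $\mathcal Q$. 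The step I expect to be the genuine obstacle is the boundary case $\mu_r(\emptyset,N)\in\{0,1\}$: there the formulas above divide by zero (or force $\alpha\notin(0,1)$), and separability in the stated sense actually fails, so the argument must presuppose $\mu_r(\emptyset,N)\in(0,1)$---precisely the proviso under which $\underline\nu,\overline\nu$ were elicited just before the statement.
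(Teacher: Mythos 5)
Your proof is correct and follows essentially the same route as the paper's: the ``only if'' direction by direct cancellation, and the converse by specializing \eqref{eq:sep} to $A'=\emptyset$, $B'=N$ to get $\mu_r(A,B)=\mu_r(A,N)-\mu_r(\emptyset,N)+\mu_r(\emptyset,B)$ and then rescaling with $\alpha=1-\mu_r(\emptyset,N)$ and the same two capacities $\underline\nu,\overline\nu$ (the paper's ``$\alpha=1-\mu_r(\emptyset,B)$'' is evidently a typo for this). The only divergence is the boundary case $\mu_r(\emptyset,N)\in\{0,1\}$: the paper patches it by taking $\alpha=1$ or $\alpha=0$, which conflicts with its own definition of separability requiring $\alpha\in(0,1)$, whereas you correctly observe that in those cases the stated equivalence genuinely fails (e.g.\ if $\mu_r(\emptyset,N)=0$ then $\mu_r$ depends on $A$ alone and no convex combination with $\alpha\in(0,1)$ can represent it), so the hypothesis $\mu_r(\emptyset,N)\in(0,1)$ must be added or the endpoints of $\alpha$ admitted.
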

\begin{proof}
Let us note that the \eqref{eq:sep} can be rewritten as 
\begin{equation}
\mu_r(A',B')-\mu_r(A',B)=\mu_r(A,B')-\mu_r(A,B).
\label{eq:sep1}
\end{equation} 
Thus the condition \eqref{eq:sep} means that the difference between two interval capacities is independent from common coalitions of criteria in the first or in the second argument.
The necessary part of the theorem is trivial, let us prove the sufficient part.
Suppose that $\mu_r$ is an interval capacity satisfying the \eqref{eq:sep}. 
Thus if $A'=\emptyset$ and $B'=N$ and if $\mu_r(\emptyset,B)\notin\{0,1\}$ we get:
\[
\mu_r(A,B)=\mu_r(A,N)-\mu_r(\emptyset,N)+\mu_r(\emptyset,B)=
\frac{\mu_r(A,N)-\mu_r(\emptyset,N)}{1-\mu_r(\emptyset,N)}\left(1-\mu_r(\emptyset,N)\right)+
\frac{\mu_r(\emptyset,B)}{\mu_r(\emptyset,N)}\mu_r(\emptyset,N).
\]
In this case $\mu_r$ is separable taking for all $A,B\in 2^N$, 
\[
\alpha=1-\mu_r(\emptyset,B),\quad 
\underline\nu(A)=\frac{\mu_r(A,N)-\mu_r(\emptyset,N)}{1-\mu_r(\emptyset,N)} \quad \textnormal{and}\quad 
\overline\nu(B)=\frac{\mu_r(\emptyset,B)}{\mu_r(\emptyset,N)}.  
\]
If $\mu_r(\emptyset,N)=0$ we take $\alpha=1$ and $\underline\nu(A)=\mu_r(A,N)$.
Finally, if $\mu_r(\emptyset,N)=1$ we take $\alpha=0$ and $\overline\nu(B)=\mu_r(\emptyset,B)$.
\end{proof}

It is easy to  verify that if $\mu_r$ is a separable interval capacity defined according to \eqref{eq:separable},
the RCI of every $\x\in\mathcal I^n$ is the mixture of the two Choquet integrals of $\underline\x,  \overline{\x}\in\rea^n$ computed, respectively, with respect to $\underline{\nu}$ and $\overline{\nu}$:
\begin{equation}
Ch_r(\x,\mu_r)=\alpha Ch(\underline\x, \underline{\nu}) + (1-\alpha)Ch(\overline{\x}, \overline{\nu}).
\label{eq:cho2}
\end{equation}
In the case of a single capacity, $\underline{\nu}=\overline{\nu}=\nu$, one could think to obtain a lower, an intermediate and an upper aggregate evaluation of an alternative $\x\in\mathcal I^n$ by means of 
\begin{equation}
Ch(\underline\x,\nu)\leq \alpha Ch(\underline\x, \nu) + (1-\alpha)Ch(\overline{\x}, \nu)\leq Ch(\overline{\x}, \nu).
\label{eq:cho3}
\end{equation}
The mixture $\alpha Ch(\underline\x, \nu) + (1-\alpha)Ch(\overline{\x}, \nu)$ is the RCI of $\x$ with respect to a separable interval capacity $\mu_r$. 
Clearly, our approach is more general since it does not impose the separability of $\mu_r$.

\section{An illustrative example}

Taking inspiration from an example very well known in the specialized literature \cite{grabisch1996application} let us consider a case of evaluation of students. 
A typical situation, which can arise in the middle of a school year, is that when some teachers, being not sure about the evaluation of a student, express it in terms of an interval. 
Perhaps it is not a great lack of information to know that a student is evaluated in Mathematics between 5 and 6.
But the problems can arise when we must compare several students having imprecise evaluations and, to this scope, we need an aggregated evaluation of each student.\\
We suppose that the students are evaluated on each subject on a 10 point scale. 
Let us suppose that we globally evaluate students with respect to evaluations in Mathematics, Physics and Literature.
Let us consider three students having the evaluations presented in Table \ref{tab:2}. 
As can be seen, some evaluations are imprecise.
Suppose also that the dean of the school ranks the students as follows:
$$S_2\succ S_1 \succ S_3.$$

\begin{table}
\begin{center}
%\resizebox{.5\textwidth}{!}{
\begin{tabular}{|c c c c|}
\hline
& & &\\
 & 
Mathematics 
& 
	Physics 	
	&
Literature
\\
& & & \\
\hline
& & &\\
$S_1$ &
8 
&
8
&
7\\
& & &\\
%\hline
& & &\\
$S_2$
& 
$[7,8]$
&
8
&
$[6,8]$
\\
& & &\\
%\hline
&&&\\

$S_3$ & 9 & 9 & $[5,6]$
\\
& & & \\
%$S_4$ & ? &  ? & ?
%\\
%&& &
% \\
%$S_5$
%& ? & $[?,?]$ & $[?,?]$
%\\
%& & &\\
\hline
\end{tabular}
%}
\caption{Students' evaluations  }
\label{tab:2}
\end{center}
\end{table}
\noindent The rationale of this ranking is that:
\begin{itemize}
	\item $S_1\succ S_3$ since the better evaluations of $S_3$ in scientific subjects, i.e. Mathematics and Physics are redundant, and the dean retains relevant the better evaluation of $S_1$ in Literature, where $S_3$ risks an insufficiency. 
	In other words, when the scientific evaluation is fairly high, Literature becomes very important;
	\item $S_2\succ S_1$ since the conjoint evaluation in Mathematics and Physics is very similar, also considering the redundancy of the two subjects. 
	However $S_2$ has the same average in Literature and, then, a greater potential;
	\item $S_2\succ S_3$ by transitivity of preferences.
	\end{itemize}
Let us note that, if we consider separately the three averages given by the minimum, central and maximum evaluations of each student for each subject, see Table \ref{tab:3}, we cannot explain the (rational) preferences of the dean.
On the contrary, the evidence of such average evaluations shows how we should consider $S_3$ the best student.
Next we show how the RCI permits to represent the preferences of the dean. 
Let $N=\left\{M,\ Ph,\ L\right\}$ be the set of criteria and let us identify the three students (alternative) $S_1, S_2$ and $S_3$, respectively with the three vectors:
\[
\begin{array}{l}
\x_1=\left([8,8],[8,8],[7,7]\right),\\
\x_2=\left([7,8],[8,8],[6,8]\right),\\
\x_3=\left([9,9],[9,9],[5,6]\right).
\end{array}
\]
The RCI represents the preferences of the dean if there exists an interval capacity $\mu_r$ such that
\[Ch_r(\x_2,\mu_r)>Ch_r(\x_1, \mu_r) >Ch_r(\x_3,\mu_r),\]
that is
\[
6+\mu_r\left(\left\{M,Ph\right\},N\right)+\mu_r\left(\left\{Ph\right\},N\right)>
7+\mu_r\left(\left\{M,Ph\right\},\left\{M,Ph\right\}\right)>
\]
\[
>5+\mu_r\left(\left\{M,Ph\right\},N\right)+3\mu_r\left(\left\{M,Ph\right\},\left\{M,Ph\right\}\right).
\]
Which can be explained, for example, by setting 
\[
\left\{
\begin{array}{l}
	\mu_r\left(\left\{M,Ph\right\},N\right)= 0.9\\
\mu_r\left(\left\{Ph\right\},N\right)=0.7\\
\mu_r\left(\left\{M,Ph\right\},\left\{M,Ph\right\}\right)=0.5.
\end{array}
\right.
\]
\begin{table}
\begin{center}
%\resizebox{.5\textwidth}{!}{
\begin{tabular}{|c c c c|}
\hline
& & &\\
 & 
minimum
& 
medium 	
	&
maximum
\\
& & & \\
\hline
& & &\\
$S_1$ & 7.67 & 7.67 & 7.67
\\
& & &\\
%\hline
& & &\\
$S_2$ & 7 & 7.5 & 8
\\
& & &\\
%\hline
&&&\\

$S_3$ & 7.67 & 7.83 & 8
\\
& & & \\
%$S_4$ & ? &  ? & ?
%\\
%&& &
% \\
%$S_5$
%& ? & $[?,?]$ & $[?,?]$
%\\
%& & &\\
\hline
\end{tabular}
%}
\caption{Average evaluations}
\label{tab:3}
\end{center}
\end{table}

\section{Axiomatic characterization of the RCI}{\label{sec:characterization}}
Let us recall some well known definitions.
Consider two vectors (alternatives) of $\rea^n$, 
$\x=(x_1,\ldots,x_n)$ and  $\y=(y_1,\ldots,y_n)$. 
We say that $\x$ dominates $\y$ if for all $i\in N$ $x_i\ge y_i$ and in this case we simply write $\x\ge\y$.
We say that $\x$ and $\y$ are comonotone if $(x_i-x_j)(y_i-y_j) \ge 0$ for all $i,j \in N$.
A monotone function $G: \rea^n\rightarrow\rea$ is a function such that 
$G(\x) \ge G(\y)$ whenever $\x\ge \y$. 
In the context of multiple criteria decision analysis, monotone functions are called aggregation functions. 
They are useful tools to aggregate $n$ evaluations of an alternative into a single evaluation.
An aggregation function $G: \rea^n\rightarrow\rea$ is:
\begin{itemize}
	\item idempotent, if for all constant vector $\textbf{a}=(a,\ldots,a)\in\rea^n$, $G(\textbf{a})=a$;
	\item homogeneous, if for all $\x\in\rea^n$ and $c>0$,  $G(c\cdot\x)=c\cdot G(\x)$;
	\item comonotone additive, if for all comonotone $\x, \y\in\rea^n$, $G(\x+\y)=G(\x)+G(\y)$.
	\end{itemize}
In \cite{schmeidler1986integral} it has been showed that the Choquet integral is an idempotent, homogeneous and comonotone additive aggregation function.
Moreover, these properties are also characterizing the Choquet integral, as showed by the following theorem.
\begin{Theo} \cite{schmeidler1986integral} A monotone function $G: \rea^n \rightarrow\rea$ satisfying $G(\textbf{1}_N)=1$ is  comonotone additive if and only if there exists a  capacity $\nu$ such that, for all $\x\in \rea^n$,
$$G(\textbf{\textit{x}})=Ch(\textbf{\textit{x}},\nu).$$
\end{Theo}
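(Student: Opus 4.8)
The statement is an equivalence, and the implication from the Choquet representation to the three properties is already recorded above (the Choquet integral is a monotone and comonotone additive aggregation function, and it is idempotent, whence $Ch(\textbf{1}_N,\nu)=1$); so the plan is to prove the converse. Assume $G:\rea^n\to\rea$ is monotone, comonotone additive, and satisfies $G(\textbf{1}_N)=1$. The natural candidate capacity is $\nu(A):=G(\textbf{1}_A)$ for every $A\subseteq N$. First I would verify this is a capacity: monotonicity of $\nu$ (and the fact that its values lie in $[0,1]$, by sandwiching between $\nu(\emptyset)$ and $\nu(N)$) is inherited from monotonicity of $G$ together with $\textbf{1}_A\le\textbf{1}_B$ when $A\subseteq B$; the normalization $\nu(N)=1$ is the hypothesis $G(\textbf{1}_N)=1$; and $\nu(\emptyset)=G(\mathbf 0)=0$ follows because $\mathbf 0$ is comonotone with itself, so $G(\mathbf 0)=G(\mathbf 0+\mathbf 0)=2G(\mathbf 0)$.

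Before decomposing a general vector I would extract from the hypotheses the two analytic properties the argument needs. Positive homogeneity on the nonnegative cone: for $\x\ge\mathbf 0$, comonotone additivity (every vector is comonotone with itself) gives $G(k\x)=kG(\x)$ for positive integers $k$, hence $G(q\x)=qG(\x)$ for positive rationals $q$; then for an arbitrary real $c>0$ I would squeeze $c$ between rationals $q_1<c<q_2$ and use monotonicity of $G$ (scaling a nonnegative vector changes it monotonically componentwise) to pass to the limit, obtaining $G(c\x)=cG(\x)$. Translation equivariance: since a constant vector $c\,\textbf{1}_N$ is comonotone with every $\x$, comonotone additivity yields $G(\x+c\,\textbf{1}_N)=G(\x)+G(c\,\textbf{1}_N)$, while $G(c\,\textbf{1}_N)=c$ for all real $c$ (for $c\ge 0$ by homogeneity and $G(\textbf{1}_N)=1$; for $c<0$ from $G(c\,\textbf{1}_N)+G(-c\,\textbf{1}_N)=G(\mathbf 0)=0$).

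The core step is the comonotone layer decomposition. With the reordering $x_{(1)}\le\cdots\le x_{(n)}$ and $A_{(i)}=\{j:x_j\ge x_{(i)}\}$ one has the pointwise (telescoping) identity
\[
\x=x_{(1)}\textbf{1}_N+\sum_{i=2}^{n}\bigl(x_{(i)}-x_{(i-1)}\bigr)\textbf{1}_{A_{(i)}},
\]
where the sets $A_{(n)}\subseteq\cdots\subseteq A_{(2)}\subseteq A_{(1)}=N$ form a chain. For $\x\ge\mathbf 0$ all summands are nonnegative, and --- this is the delicate point --- they are jointly comonotone: each $\textbf{1}_{A_{(i)}}$ is a nondecreasing threshold transform of $\x$, so every partial sum, being a nonnegative combination of indicators of a single chain, remains comonotone with each of the remaining terms. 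This lets me apply comonotone additivity one layer at a time, and then homogeneity term by term, to obtain $G(\x)=x_{(1)}+\sum_{i=2}^{n}(x_{(i)}-x_{(i-1)})\nu(A_{(i)})$, which is exactly $Ch(\x,\nu)$ by \eqref{choquet1}. Finally I would remove the sign restriction by translation: for general $\x$ pick $c\ge-\min_i x_i$, apply the nonnegative case to $\x+c\,\textbf{1}_N\ge\mathbf 0$, and use that both $G$ and $Ch(\cdot,\nu)$ are translation equivariant (for the latter, a constant shift changes only $x_{(1)}$ in \eqref{choquet1} and leaves the increments and the sets $A_{(i)}$ fixed). I expect the main obstacles to be precisely the two technical points flagged above: justifying that the full chain of layers is jointly comonotone so that pairwise comonotone additivity may be iterated, and the rational-to-real passage in the homogeneity step, which genuinely invokes monotonicity rather than comonotone additivity alone.
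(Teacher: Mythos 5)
Your proof is correct and follows the standard Schmeidler strategy; the paper itself does not prove this theorem (it is quoted from \cite{schmeidler1986integral}), but your argument mirrors, step for step, the proof the paper gives for its own Theorem~\ref{Theo:maintheo} characterizing the RCI: derive homogeneity and idempotency from comonotone additivity plus monotonicity, set $\nu(A)=G(\textbf{1}_A)$, and decompose $\x$ as a telescoping comonotone sum of scaled indicators of a chain before applying comonotone additivity and homogeneity term by term. Your added care on the two delicate points --- the joint comonotonicity of the partial sums, and the rational-to-real passage in the homogeneity step restricted to the nonnegative cone with a final translation to handle general signs --- is sound and slightly more explicit than the paper's terser treatment of the analogous steps.
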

\noindent Note that homogeneity is not among the hypotheses of the theorem since it can be elicited from monotonicity and comonotone additivity. Moreover from homogeneity and the condition $G(\textbf{1}_N)=1$ we also elicit idempotency of $G$.\\
Now we turn our attention to the RCI.
As we shall soon see, the RCI with respect to an interval capacity $\mu_r$,  
can be considered a generalized aggregation function. 
This means a monotone function, $Ch_r(.,\mu_r):\mathcal I^n\rightarrow \rea$, transforming vectors of interval evaluations into a single overall numerical evaluation of that alternative.
In order to provide an axiomatic characterization of the RCI we need to extend the notions of monotonicity, idempotency, homogeneity and comonotone additivity for a generic function $G:\mathcal I^n\rightarrow \rea$. 
To this purpose we introduce on $\mathcal I$ and on $\mathcal I^n$, a mixture operation and a preference relation.
%\begin{Def}
%For every $a,b\in\rea^+$ and $[x_1,x_2], [y_1,y_2]\in \mathcal I$ we define the following interval mixture operation:
%\[
%a\cdot[x_1,x_2] + b\cdot[y_1,y_2]=[a x_1 + b y_1, a x_2 + b y_2].
%\]
%Moreover, for every pair of vectors of $\mathcal I^n$, 
%$\x=\left(\left[\lf _1,\rg _1\right],\ldots,\left[\lf _n,\rg _n\right]\right)$ 
%and 
%$\textbf{y}=([\underline{y}_1,\overline{y}_1],\ldots,[\underline{y}_n,\overline{y}_n])$ 
%and for all $a,b\in\rea^+$, we define  
%$a\x + b\y$ 
%as the element of $I^n$ whose $i^{th}$ component is 
%$a\cdot\left[\lf _i,\rg _i\right] + b\cdot[\underline{y}_i,\overline{y}_i],$ 
%for all $i=1,\ldots,n$.
%\end{Def}

\begin{Def}
For every $a\in\rea^+$ and $[x_1,x_2]\in \mathcal I$ we define: 
$a\cdot[x_1,x_2]=[a x_1, a x_2].$
Moreover, for every  
$\x=\left(\left[\lf _1,\rg _1\right],\ldots,\left[\lf _n,\rg _n\right]\right)\in\mathcal I^n$ 
we define  
$a\cdot\x$  
as the element of $I^n$ whose $i^{th}$ component is 
$a\cdot\left[\lf _i,\rg _i\right]$, 
for all $i=1,\ldots,n$.
\end{Def}

\begin{Def}
For every $[x_1,x_2], [y_1,y_2]\in \mathcal I$ we define:
\[
[x_1,x_2] + [y_1,y_2]=[x_1 + y_1,  x_2 +y_2].
\]
Moreover, for every pair of vectors of $\mathcal I^n$,  
$\x=\left(\left[\lf _1,\rg _1\right],\ldots,\left[\lf _n,\rg _n\right]\right)$ 
and 
$\textbf{y}=([\underline{y}_1,\overline{y}_1],\ldots,[\underline{y}_n,\overline{y}_n])$, 
we define  
$\x + \y$ 
as the element of $I^n$ whose $i^{th}$ component is 
$\left[\lf _i,\rg _i\right] + [\underline{y}_i,\overline{y}_i],$ 
for all $i=1,\ldots,n$.
\end{Def}
\noindent Let us note that the two previous definitions can be summarized as follows.
For every $a,b\in\rea^+$ and $[x_1,x_2], [y_1,y_2]\in \mathcal I$ we have the following ``mixture operation'':
\[
a\cdot[x_1,x_2] + b\cdot[y_1,y_2]=[a x_1 + b y_1, a x_2 + b y_2].
\]
Moreover, for every pair of vectors of $\mathcal I^n$, 
$\x=\left(\left[\lf _1,\rg _1\right],\ldots,\left[\lf _n,\rg _n\right]\right)$ 
and 
$\textbf{y}=([\underline{y}_1,\overline{y}_1],\ldots,[\underline{y}_n,\overline{y}_n])$ 
and for all $a,b\in\rea^+$, we have that   
$a\x + b\y$ 
is the element of $I^n$ whose $i^{th}$ component is 
$a\cdot\left[\lf _i,\rg _i\right] + b\cdot[\underline{y}_i,\overline{y}_i],$ 
for all $i=1,\ldots,n$.
\begin{Def}
For all $[\alpha,\beta], [\alpha_1,\beta_1]\in \mathcal I$, we define $[\alpha,\beta]\leq_\mathcal I [\alpha_1,\beta_1]$ whenever $\alpha\leq\alpha_1$ and $\beta\leq\beta_1$.
The symmetric and asymmetric part of $\leq$ on $\mathcal I$ are denoted by $=_\mathcal I$ and $<_\mathcal I$.
Moreover, for every pair of vectors of $\mathcal I^n$, 
$\x=\left(\left[\lf _1,\rg _1\right],\ldots,\left[\lf _n,\rg _n\right]\right)$ 
and 
$\textbf{y}=([\underline{y}_1,\overline{y}_1],\ldots,[\underline{y}_n,\overline{y}_n])$ 
we write $\x\le_\mathcal I\y$ whenever 
$\left[\lf _i,\rg _i\right]\le_\mathcal I [\underline{y}_i,\overline{y}_i]$ 
for all $i\in N$.
\end{Def}  
For the sake of simplicity in the remaining part of the paper the relations $\leq_\mathcal I,\ =_\mathcal I$ and $<_\mathcal I$ shall be simply denoted by $\leq,\ =$ and $<$.
\begin{rem}
Alternatively, for all $\x,\y\in\mathcal I^n$ we can say that $\x\leq \textbf{y}$ iff 
$\underline{\x}\leq \underline{\textbf{y}}$ 
and 
$\overline{\x}\leq \overline{\textbf{y}}$.
\end{rem}

Let us note that $(\mathcal I,\leq)$ is a partial ordered set, i.e. $\leq$ is reflexive, antisymmetric and transitive.
However, this relation is not complete, e.g. we are not able to establish the preference between $[2,5]$ and $[3,4]$.
Then, generally, the evaluations of an alternative on the various criteria, cannot be ranked.\\
The notion of comonotonicity can be easily extended to elements of $\mathcal I^n$ identifying every vector  
$\x=\left(\left[\lf _1,\rg _1\right],\ldots,\left[\lf _n,\rg _n\right]\right)\in \mathcal I^n$ 
with the vector 
$\x^*=(x_1,\ldots,x_{2n})=\left(\lf _1,\ldots, \lf_n,\rg_1,\ldots,\rg _n \right)\in \rea^{2n}$, 
according to \eqref{eq:ident}

\begin{Def}
The two vectors of $\mathcal I^n$, 
$\x=\left(\left[\lf _1,\rg _1\right],\ldots,\left[\lf _n,\rg _n\right]\right)$ 
and 
$\textbf{y}=([\underline{y}_1,\overline{y}_1],\ldots,[\underline{y}_n,\overline{y}_n])$ 
are comonotone (or comonotonic) if they are, in $\rea^{2n}$, the two vectors   
$\x^*=(\underline{x}_1,\ldots,\underline{x}_n,\ldots,\overline{x}_1,\ldots,\overline{x}_n)$ 
and 
$\textbf{y}^*=(\underline{y}_i,\ldots,\underline{y}_n,\ldots,\overline{y}_1,\ldots,\overline{y}_n)$. 
\label{co-monotone}
\end{Def}

\noindent 
Clearly a constant vector $\textbf{\textit{k}}=\left(k,k,\ldots,k\right)\in\rea^n$ with $k\in\rea$, is comonotone with every  $\x\in \mathcal I^n$.
Suppose that  
$\x=\left(\left[\lf _1,\rg _1\right],\ldots,\left[\lf _n,\rg _n\right]\right)$ 
and 
$\textbf{y}=([\underline{y}_1,\overline{y}_1],\ldots,[\underline{y}_n,\overline{y}_n])$ 
are two comonotone vectors of $\mathcal I^n$
and consider the correspondent vectors of $\rea^{2n}$, 
$\x^*=(x_1,\ldots,x_{2n})=\left(\lf _1,\ldots, \lf_n,\rg_1,\ldots,\rg _n \right)$ 
and 
$\y^*=(y_1,\ldots,y_{2n})=(\underline{y}_1,\ldots,\underline{y}_n,\ldots,\overline{y}_1,\ldots,\overline{y}_n)$. 
Schmeidler \cite{schmeidler1986integral} showed that there exists a permutation of indexes 
$(\cdot):\left\{1,\ldots,2n\right\}\rightarrow \left\{1,\ldots,2n\right\}$
such that 
$x_{(1)}\leq x_{(2)}\leq\ldots\leq x_{(2n)}$
and 
$y_{(1)}\leq y_{(2)}\leq\ldots\leq y_{(2n)}$. 

\begin{rem}
If $\x$ and $\textbf{y}$ are comonotone, then both 
$\underline{\x}$ and $\underline{\textbf{y}}$ 
are comonotone
as well as  
$\overline{\x}$ and $\overline{\textbf{y}}$.
The reverse is generally false.
For example, if $N=\left\{1,2\right\}$, 
$\x=\left(\left[1,3\right],\left[2,4\right]\right)$ 
and
$\textbf{y}=\left(\left[1,3\right],\left[4,5\right]\right)$ 
are non comonotone, although 
$\underline{\x}$ 
is comonotone with 
$\underline{\textbf{y}}$ 
and 
$\overline{\x}$
is comonotone with 
$\overline{\textbf{y}}$.
\end{rem} 
Let us note that for all $(A,B),(A',B')\in \mathcal Q$, the relation $(A,B)\subseteq (A',B')$, ensures that $\textbf{1}_{\left(A,B\right)}$ and $\textbf{1}_{\left(A',B'\right)}$ are comonotone. 
Note that their sum $\textbf{1}_{\left(A,B\right)} + \textbf{1}_{\left(A',B'\right)}$ 
is comonotone too with the starting vectors (see tab \ref{tab:1}). 
\begin{table}[ht]
\begin{center}
\resizebox{.5\textwidth}{!}{
\begin{tabular}{|c c c c|}
\hline
& & &\\
 & 
$\textbf{1}_{\left(A,B\right)}$
& 
$\textbf{1}_{\left(A',B'\right)}$
&
$\textbf{1}_{\left(A,B\right)}+\textbf{1}_{\left(A',B'\right)}$
\\
& & & \\
\hline
& & &\\
$A$ &
[1,1]&
[1,1]
&
[2,2]\\
%\hline
& & &\\
$(A'\cap B)\setminus A$& 
$[0,1]$&
[1,1]
&
$[1,2]$
\\
& & &\\
%\hline
$B\setminus A'$
& 
$[0,1]$
&
$[0,1]$
&
$[0,2]$
\\
& & & \\
$A'\setminus B$
& 
$[0,0]$
&
$[1,1]$
&
[1,1]\\
&& & \\$B'\setminus (A'\cup B)$
& 
$[0,0]$
&
$[0,1]$
&
$[0,1]$
\\
& & &\\\hline
\end{tabular}
}
\caption{comonotone indicator functions.}
\label{tab:1}
\end{center}
\end{table}

\subsection{Properties of the RCI and characterization Theorem}
Let $\mu_r$ be an interval capacity and let $Ch_r(\cdot, \mu_r)$ be the RCI with respect to $\mu_r$. 
Then $Ch_r(\cdot, \mu_r)$ satisfies the following properties.
\begin{itemize}
	\item [(P1)]\textbf{Idempotency}.
	For all $\textbf{k}=\left(k,k,\ldots,k\right)$ with $k\in\rea$, 
$Ch_r(\textbf{k}, \mu_r)=k$.
	\item [(P2)]\textbf{Positive homogeneity}.
	For all $a>0$ and $\x\in \mathcal I^n$, $Ch_r(a\cdot \x,\mu_r)=a\cdot Ch_r(\x,\mu_r).$
	\item [(P3)]\textbf{Monotonicity}. 
	For all $\x,\y \in \mathcal I^n$ with $\x\leq \textbf{y}$, 
	$Ch_r\left(\x,\mu_r\right)\leq Ch_r\left(\y,\mu_r\right)$.
  \item [(P4)] \textbf{Comonotone additivity}.
For all comonotone $\x,\y\in\mathcal I^n$, $Ch_r\left(\x+\y,\mu_r\right)=
Ch_r\left(\x,\mu_r\right)+ 
Ch_r\left(\y,\mu_r\right).$
\end{itemize}
\begin{proof}
(P1) follows trivially by definition of RCI. Let us prove (P2). Fixed $a>0$ and $\x\in\mathcal I^n$, by definition 
\[Ch_r\left(a\cdot \x,\mu_r\right)= 
\int_{\min\left\{a\lf_1,\ldots,a\lf_n\right\}}^{\max\left\{a\rg_1,\ldots,a\rg_n\right\}}\mu_r(\{i \in N\  |\  a\lf_i \geq t \},\{i \in N\ |\ a\rg_i\geq t \})dt\ +\ \min\left\{a\lf_1,a\lf_2,\ldots,a\lf_n\right\}=\]
\[
=a\cdot\int_{a\cdot \min\left\{\lf_1,\ldots,\lf_n\right\}}^{a\cdot \max\left\{\rg_1,\ldots,\rg_n\right\}}\mu_r(\{i \in N\  |\  \lf_i \geq \frac{t}{a} \},\{i \in N\ |\ \rg_i\geq \frac{t}{a} \})d(t/a)\ +\ a\cdot \min\left\{\lf_1,\lf_2,\ldots,\lf_n\right\}=
a\cdot Ch_r\left(\x,\mu_r\right).
\]
In the last passage we change the variable in the integral from $y=t/a$ to $z=y\cdot a$.
%From the generality of $a>0$ and $\x\in\mathcal I^n$, we conclude that the RCI is positive homogeneous.
\\
To prove (P3) let us note that for all $t\in \rea$ and for all 
$\x,\y \in \mathcal I^n$ with $\x\leq \textbf{y}$, we get that  
$\left\{i\in N:\ \lf_i\geq t\right\}\subseteq\{i\in N:\ \underline{y}_i\geq t\}$ and
$\left\{i\in N:\ \rg_i\geq t\right\}\subseteq\left\{i\in N:\ \overline{y}_i\geq t\right\}$. 
We conclude that the RCI is a monotonic function by definition and invoking the monotonicity of $\mu_r$ and of the Riemann integral.\\
To prove (P4), suppose that 
$\x=\left(\left[\lf _1,\rg _1\right],\ldots,\left[\lf _n,\rg _n\right]\right)$ 
and 
$\textbf{y}=([\underline{y}_1,\overline{y}_1],\ldots,[\underline{y}_n,\overline{y}_n])$ 
are two comonotone vectors of $\mathcal I^n$
and consider the correspondent vectors of $\rea^{2n}$, 
$\x^*=(x_1,\ldots,x_{2n})$ 
and 
$\y^*=(y_1,\ldots,y_{2n})$, defined according to \eqref{eq:ident}.
Thus, there exists a permutation of indexes 
$(\cdot):\left\{1,\ldots,2n\right\}\rightarrow \left\{1,\ldots,2n\right\}$
such that 
$x_{(1)}\leq\ldots\leq x_{(2n)}$
and 
$y_{(1)}\leq\ldots\leq y_{(2n)}$ 
or equivalently (being $\x^*$ and $\y^*$ comonotone), 
$x_{(1)}+y_{(1)}\leq\ldots\leq x_{(2n)}+y_{(2n)}.$ 
By setting for all $i=1,\ldots,2n$
\begin{eqnarray}
A_{(i)}=\big\{j\in N\ |\ \lf_j\geq x_{(i)}\big\}\  \bigcap \  \big\{j\in N\  |\ \underline{y}_j\geq y_{(i)}\big\},\nonumber\\
B_{(i)}=\big\{j\in N\ |\ \rg_j\geq x_{(i)}\big\}\  \bigcap \ \big\{j\in N\ |\ \overline{y}_j\geq y_{(i)}\big\},
\label{eq:setting}
\end{eqnarray}
we have that 

\begin{eqnarray}
Ch_r\left(\x,\mu_r\right) =\sum_{i=2}^{2n}{\left(x_{(i)}-x_{(i-1)}\right)\mu_r\left(A_{(i)},B_{(i)}\right)}\ + x_{(1)},
\nonumber\\
Ch_r\left(\y,\mu_r\right) =\sum_{i=2}^{2n}{\left(y_{(i)}-y_{(i-1)}\right)\mu_r\left(A_{(i)},B_{(i)}\right)}\ + y_{(1)},
\label{eq:com}
\end{eqnarray}
and also 
\begin{equation}
Ch_r\left(\x+\y,\mu_r\right) =\sum_{i=2}^{2n}{\left(x_{(i)}+ y_{(i)}-x_{(i-1)}-y_{(i-1)}\right)
\mu_r\left(A_{(i)},B_{(i)}\right)}\ + x_{(1)}+ y_{(1)}.
\label{eq:com1}
\end{equation}
From \eqref{eq:com} and \eqref{eq:com1}, comonotone additivity is obtained.
\end{proof}

\begin{rem}
Since the RCI is additive on comonotone vectors and being a constant vector comonotone with all vectors, it follows that the RCI is translational invariant. 
This means that for all $\x\in\mathcal I^n$ and for all $\textbf{k}=(k,\ldots,k)\in\rea^n$, 
$Ch_r(\x+\textbf{\textit{k}},\mu_r)=k + Ch_r(\x,\mu_r)$.  
\end{rem}

\noindent The next theorem establishes that, the above properties are characterizing for the RCI.

\begin{Theo}\label{Theo:maintheo}
Let $G :\mathcal I^n\rightarrow \rea$ be a function satisfying 
\begin{itemize}
	\item $G(\textbf{1}_{(N,N)})=1$, 
	\item (P3) Monotonicity,
	\item (P4) Comonotone additivity.
\end{itemize}
Thus, by assuming  $\mu_r(A,B)=G\left(\textbf{1}_{(A,B)}\right)$ for all $(A,B) \in \mathcal Q$, 
$$\ \ G(\x,\mu_r)=Ch_r(\x,\mu_r), \quad\textnormal{for all}\quad \x\in \mathcal I^n.$$
\end{Theo}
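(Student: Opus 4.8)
The plan is to mimic Schmeidler's argument for the classical Choquet integral, but carried out in $\rea^{2n}$ through the identification $\x\leftrightarrow\x^*$ of \eqref{eq:ident}. The engine of the proof is the decomposition of an arbitrary $\x\in\mathcal I^n$ into a constant plus nonnegative, pairwise comonotone ``layers'' that are generalized indicator functions, followed by the repeated application of (P4). First I would record that $\mu_r(A,B)=G(\textbf{1}_{(A,B)})$ really is an interval capacity: comonotone additivity applied to $\mathbf 0+\mathbf 0$ forces $G(\mathbf 0)=0$, so $\mu_r(\emptyset,\emptyset)=0$, while $\mu_r(N,N)=G(\textbf{1}_{(N,N)})=1$ by hypothesis; monotonicity (P3) together with $(A,B)\subseteq(C,D)\Rightarrow \textbf{1}_{(A,B)}\le \textbf{1}_{(C,D)}$ yields the monotonicity of $\mu_r$, and $\mathbf 0\le\textbf{1}_{(A,B)}\le\textbf{1}_{(N,N)}$ gives the bound $\mu_r(A,B)\in[0,1]$.

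Second, I would derive from (P3)--(P4) the auxiliary properties that (P1)--(P2) would have supplied had they been assumed. Iterating comonotone additivity on a vector with itself gives $G(m\mathbf v)=mG(\mathbf v)$ for positive integers $m$, hence $G(q\mathbf v)=qG(\mathbf v)$ for positive rationals $q$; for a nonnegative vector $\mathbf v\ge\mathbf 0$ the sandwich $r_1\mathbf v\le c\mathbf v\le r_2\mathbf v$ with rationals $r_1\le c\le r_2$ approaching $c$, combined with (P3), upgrades this to positive homogeneity $G(c\mathbf v)=cG(\mathbf v)$ for every real $c>0$. Since constants are comonotone with every vector, (P4) gives $G(\x+\mathbf k)=G(\x)+G(\mathbf k)$; combining $G(c\,\textbf{1}_{(N,N)})=c$ for $c\ge 0$ with $G(\mathbf 0)=0$ then forces $G(\mathbf k)=k$ for all real $k$ (take $\x=-\mathbf k$ when $k<0$), i.e. full idempotency and translation invariance $G(\x+\mathbf k)=G(\x)+k$.

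Third, I would set up the layer decomposition. For $\x\in\mathcal I^n$ let $(\cdot)$ order $\x^*$ so that $x_{(1)}\le\cdots\le x_{(2n)}$, and let $A_{(i)},B_{(i)}$ be as in the definition of the RCI. The key identity, verified by telescoping coordinate by coordinate on $\x^*$, is
\[
\x=x_{(1)}\,\textbf{1}_{(N,N)}+\sum_{i=2}^{2n}\bigl(x_{(i)}-x_{(i-1)}\bigr)\,\textbf{1}_{(A_{(i)},B_{(i)})}.
\]
Indeed, by \eqref{eq:indicator} the $k$-th coordinate of $\textbf{1}_{(A_{(i)},B_{(i)})}^*$ equals $1$ if $x^*_k\ge x_{(i)}$ and $0$ otherwise, so reading off the lower (resp.\ upper) part at criterion $j$ and summing the telescoping differences returns $\lf_j$ (resp.\ $\rg_j$). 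This same formula shows that each coordinate of $\textbf{1}_{(A_{(i)},B_{(i)})}^*$ is a nondecreasing function of the corresponding coordinate of $\x^*$, so all the layers, together with the constant $\textbf{1}_{(N,N)}$, are simultaneously nondecreasing under the single permutation $(\cdot)$; hence they are pairwise comonotone, and every partial sum of them is comonotone with each remaining layer.

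Finally I would assemble the pieces: translation invariance extracts the constant term $x_{(1)}$; the simultaneous ordering lets me peel the layers off one at a time by repeated use of (P4) (each partial sum stays comonotone with the next layer); and positive homogeneity pulls the nonnegative coefficients $x_{(i)}-x_{(i-1)}$ outside, leaving
\[
G(\x)=x_{(1)}+\sum_{i=2}^{2n}\bigl(x_{(i)}-x_{(i-1)}\bigr)G\bigl(\textbf{1}_{(A_{(i)},B_{(i)})}\bigr)=x_{(1)}+\sum_{i=2}^{2n}\bigl(x_{(i)}-x_{(i-1)}\bigr)\mu_r\bigl(A_{(i)},B_{(i)}\bigr),
\]
which is exactly the RCI in the form \eqref{eq:RCI1}. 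The main obstacle is the second step: squeezing positive homogeneity over the reals out of monotonicity and comonotone additivity alone, and cleanly justifying the passage from pairwise to \emph{iterated} comonotone additivity. Both difficulties are resolved by the same observation, that the layers are ordered by one common permutation, so I would emphasize that point.
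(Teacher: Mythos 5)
Your proposal is correct and follows essentially the same route as the paper's proof: deriving homogeneity and idempotency from (P3)--(P4) and $G(\textbf{1}_{(N,N)})=1$ via the integer--rational--real sandwich, verifying that $\mu_r(A,B)=G(\textbf{1}_{(A,B)})$ is an interval capacity, and decomposing $\x$ into the telescoping sum of nested (hence comonotone) generalized indicator layers before applying comonotone additivity and homogeneity. The only differences are cosmetic: you split off the constant term $x_{(1)}\textbf{1}_{(N,N)}$ by translation invariance where the paper absorbs it into the sum with $x_{(0)}=0$, and you make explicit the single-common-permutation argument justifying the iterated use of (P4), which the paper leaves implicit.
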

\begin{proof}
First let us note that the properties (P1) and (P2), are not among the hypotheses of Theorem \ref{Theo:maintheo} since they are implied by comonotone additivity (P4), 
monotonicity (P3) and the condition $G(\textbf{1}_{(N,N)})=1$.
Altought the proof of this claim is similar to that in \cite{schmeidler1986integral}, for the sake of clarity, we recall it here. 
Regarding the homogeneity, if $n\in\nat$ is a positive integer, by comonotone additivity we get
\[
G(n\cdot\x,\mu_r)=G(\overbrace{\x,\ldots,\x}^{\textnormal{n times}},\mu_r)=n\cdot G(\x,\mu_r), 
\qquad \textnormal{for every }\x\in\mathcal I^n.
\]
If $a=n/m\in\raz^+$ is a positive razional number, with $n,m\in\nat$ we get 
\[
n\cdot G(\x,\mu_r)= G(n\cdot\x,\mu_r)=G(\frac{nm}{m}\cdot\x,\mu_r)=m\cdot G(\frac{n}{m}\cdot\x,\mu_r),
\qquad \textnormal{for every }\x\in\mathcal I^n.
\]
Finally, for $a\in \rea^+\setminus\raz^+$ it is sufficient to consider two sequences of razional numbers convergent to 
$a$,  $\{a_i^-\}$ and $\{a_i^+\}$ such that 
$a_1^-<a_2^-\ldots<a<\ldots a_2^+<a_1^+$ and using monotonicity of $G$ we get that 
$G(a\cdot \x,\mu_r)=a\cdot G(\x,\mu_r)$ for every $\x\in\mathcal I^n.$\\
Regarding idempotency, if $a\in\rea^+$ we get $G(a\cdot \textbf{1}_{(N,N)})=a\cdot G(\textbf{1}_{(N,N)})=a$. 
By comonotone additivity 
$0=G(0\cdot\textbf{1}_{(N,N)})$ 
and 
$0=G((a-a)\textbf{1}_{(N,N)})= G(a\cdot\textbf{1}_{(N,N)})+  G(-a\cdot\textbf{1}_{(N,N)})=a+ G(-a\cdot\textbf{1}_{(N,N)})$, 
thus 
$G(-a\cdot\textbf{1}_{(N,N)})=-a$.\\
The hypotheses of theorem ensure that the 
\begin{equation}
\mu_r(A,B)=G\left(\textbf{1}_{(A,B)}\right)\ \forall(A,B) \in \mathcal Q  
\label{eq:intervalcap}
\end{equation}
defines an interval capacity.
Indeed: 
$\mu_r(N,N)=G(\textbf{1}_{(N,N)})=1$; 
$\mu_r(\emptyset,\emptyset)=G(\textbf{1}_{(\emptyset,\emptyset)})=0$, since by comonotone additivity 
$G(\textbf{1}_{(\emptyset,\emptyset)})=G(\textbf{1}_{(\emptyset,\emptyset)}+\textbf{1}_{(\emptyset,\emptyset)})=
G(\textbf{1}_{(\emptyset,\emptyset)}) +G(\textbf{1}_{(\emptyset,\emptyset)})$ and thus $G(\textbf{1}_{(\emptyset,\emptyset)})=0$; 
for all $(A,B), (C,D) \in \mathcal Q$ such that $(A, B) \subseteq (C,D)$, $\mu_r(A,B) \leq \mu_r(C,D)$ follows by monotonicity of $G$.
Let  
$\x=\left(\left[\lf _1,\rg _1\right],\ldots,\left[\lf _n,\rg _n\right]\right)$ be a vector and  
$(\cdot):\left\{1,\ldots,2n\right\}\rightarrow \left\{1,\ldots,2n\right\}$ be a permutation 
such that 
$x_{(1)}\leq x_{(2)}\leq\ldots\leq x_{(2n)}$.
For all $i=1,\ldots,2n$ define 
$\underline{A}_{(i)}=\left\{i\in N\ |\ \lf_i\geq x_{(i)}\right\}$ and  
$\overline{A}_{(i)}=\left\{i\in N\ |\ \rg_i\geq x_{(i)}\right\}$.  
Clearly $\left(\underline{A}_{(i)},\overline{A}_{(i)}\right)\in\mathcal Q$ and, since 
$\underline{A}_{(i+1)}\subseteq \underline{A}_{(i)}$ and 
$\overline{A}_{(i+1)}\subseteq \overline{A}_{(i)}$, 
then the the vectors  
$\textbf{1}_{\left(\underline{A}_{(i)},\overline{A}_{(i)}\right)}$ 
are comonotone for all $i=1,\ldots,2n$. 
The vector $\x$ can be rewritten as sum of comonotone vectors (take $x_{(0)}=0$):
\begin{equation}
\x=\sum_{i=1}^{2n}{\left[x_{(i)}-x_{(i-1)}\right]\cdot \textbf{1}_{\left(\underline{A}_{(i)},\overline{A}_{(i)}\right)}}.
\label{eq:fsum}
\end{equation}
Finally, the proof follows from \eqref{eq:fsum} by using, respectively, comonotone additivity, homogeneity of $G$ and definition of the interval capacity $\mu_r$ according to \eqref{eq:intervalcap}:
\[
G(\x)=
G\left(
\sum_{i=1}^{2n}{\left[x_{(i)}-x_{(i-1)}\right]\cdot \textbf{1}_{\left(\underline{A}_{(i)},\overline{A}_{(i)}\right)}}
\right)
=
\sum_{i=1}^{2n}
G\left(
\left[x_{(i)}-x_{(i-1)}\right]\cdot 
\textbf{1}_{\left(\underline{A}_{(i)},\overline{A}_{(i)}\right)}\right)=
\]
\[
=
\sum_{i=1}^{2n}{\left[x_{(i)}-x_{(i-1)}\right]\cdot 
G\left(\textbf{1}_{\left(\underline{A}_{(i)},\overline{A}_{(i)}\right)}\right)}
=\sum_{i=1}^{2n}{\left[x_{(i)}-x_{(i-1)}\right]\cdot 
\mu_r\left(\underline{A}_{(i)},\overline{A}_{(i)}\right)}=
Ch_r(\x,\mu_r).
\]
\end{proof}

\section{The RCI and M{\"o}bius inverse}
%\begin{rem}
%Insert here motivation of the use of the M{\"o}bius inverse: see \cite{rota1964foundations,chateauneuf1989some,marichal2001axiomatic,marichal2002aggregation, grabisch2005biI,grabisch2005bi} 
%\label{rem2}
%\end{rem}
The following proposition gives the closed formula of the M{\"o}bius inverse \cite{rota1964foundations} of a function on $\mathcal Q$.
\begin{pro}
Suppose $f,g:\mathcal Q\rightarrow\rea$ are two real valued functions on $\mathcal Q$. Then 
\begin{equation}
f(A,B)=\sum_{\substack{(C,D)\in\mathcal Q \\(C,D)\subseteq(A,B)}
}
{\hspace{-5mm}g(C,D)}\qquad\textnormal{for all}\quad (A,B)\in\mathcal Q
\label{eq:mobius1}
\end{equation}
%\[
%f(A,B)=\sum_{\begin{subarray}{c}
%(C,D)\subseteq(A,B)\\(C,D)\in\mathcal Q
%\end{subarray}}g(C,D)
%\] alternatively works also this 
if and only if 
\begin{equation}
g(A,B)=
\sum_{\emptyset\subseteq X\subseteq A}
{
\left(-1\right)^{\abs{X}}
\hspace{-10mm}\sum_{\substack{(C,D)\in\mathcal Q \\(C,D)\subseteq (A\setminus X,B\setminus X)}}{\hspace{-8mm}\left(-1\right)^{\abs{B\setminus A}-\abs{D\setminus C}}f(C,D)}
} 
\qquad\textnormal{for all}\quad (A,B)\in\mathcal Q.
\label{eq:mobius2}
\end{equation}
\label{pro:mobius}
\end{pro}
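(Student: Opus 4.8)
The plan is to read \eqref{eq:mobius1} and \eqref{eq:mobius2} as two linear maps on the finite-dimensional space of real functions on $\mathcal Q$: the \emph{zeta transform} $Z\colon g\mapsto f$ given by \eqref{eq:mobius1}, and the map $M\colon f\mapsto g$ defined by the right-hand side of \eqref{eq:mobius2}. Proving the equivalence amounts to showing $M=Z^{-1}$. Since $\mathcal Q$ is finite, $Z$ is an endomorphism of a finite-dimensional vector space, so it suffices to verify the single composition $M\circ Z=\mathrm{id}$: this gives the implication \eqref{eq:mobius1}$\Rightarrow$\eqref{eq:mobius2} at once, and it forces $Z$ to be invertible with $Z\circ M=\mathrm{id}$ as well, yielding the converse. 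Concretely I would substitute $f=Zg$ into the right-hand side of \eqref{eq:mobius2} and check that the result is $g(A,B)$. The only combinatorial fact used throughout is the cancellation identity $\sum_{S\subseteq T}(-1)^{\abs{S}}=1$ if $T=\emptyset$ and $0$ otherwise.

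First I would simplify \eqref{eq:mobius2} itself, before any substitution. Pulling the constant factor $(-1)^{\abs{B\setminus A}}$ out of both sums and interchanging the $X$-summation with the $(C,D)$-summation, one sees that a fixed $(C,D)\in\mathcal Q$ with $C\subseteq A$ and $D\subseteq B$ contributes exactly for those $X$ with $C\subseteq A\setminus X$ and $D\subseteq B\setminus X$; using $C\subseteq D$ these reduce to $X\subseteq A$ and $X\cap D=\emptyset$, i.e. $X\subseteq A\setminus D$. The inner sum $\sum_{X\subseteq A\setminus D}(-1)^{\abs{X}}$ then vanishes unless $A\subseteq D$, so \eqref{eq:mobius2} is equivalent to the cleaner identity
\[
g(A,B)=\sum_{\substack{(C,D)\in\mathcal Q\\ C\subseteq A\subseteq D\subseteq B}}(-1)^{\abs{B\setminus A}-\abs{D\setminus C}}f(C,D).
\]

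With this reduction in hand, I would substitute $f(C,D)=\sum_{(E,F)\subseteq(C,D)}g(E,F)$ and interchange the order of summation to read off the coefficient of each $g(E,F)$. Every surviving pair obeys $C\subseteq A\subseteq D$, so $\abs{D\setminus C}=\abs{D}-\abs{C}$ and the inclusion $C\subseteq D$ required for $(C,D)\in\mathcal Q$ is automatic; hence the ranges $E\subseteq C\subseteq A$ and $A\cup F\subseteq D\subseteq B$ decouple and the coefficient of $g(E,F)$ factors as
\[
(-1)^{\abs{B\setminus A}}\Big(\sum_{E\subseteq C\subseteq A}(-1)^{\abs{C}}\Big)\Big(\sum_{A\cup F\subseteq D\subseteq B}(-1)^{\abs{D}}\Big).
\]
Two further applications of the cancellation identity kill the first factor unless $E=A$ and the second unless $A\cup F=B$; combined with $E\subseteq F$ this forces $(E,F)=(A,B)$, and the remaining sign is $(-1)^{\abs{B\setminus A}+\abs{A}+\abs{B}}=(-1)^{2\abs{B}}=1$. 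Thus the double sum collapses to $g(A,B)$, which is precisely $M\circ Z=\mathrm{id}$.

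The routine parts are the two interchanges of summation and the repeated binomial cancellation; the one place demanding care is the bookkeeping of the linked inclusions $C\subseteq A\subseteq D\subseteq B$ together with $E\subseteq C$, $F\subseteq D$, $E\subseteq F$, and in particular the observations that $C\subseteq D$ is forced (so the $C$- and $D$-sums genuinely separate) and that the surviving index is uniquely $(A,B)$. I expect this to be the main obstacle. An alternative route---identifying the M\"obius function of the lattice $(\mathcal Q,\subseteq)$ directly and invoking the abstract M\"obius inversion theorem---would require computing $\mu_{\mathcal Q}\big((C,D),(A,B)\big)$ on arbitrary intervals of $\mathcal Q$, which is more delicate than the self-contained substitution above.
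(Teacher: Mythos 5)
Your argument is correct, but it follows a genuinely different route from the paper's. The paper proves \emph{both} implications by direct substitution: it first develops a toolkit of alternating-sum lemmas over intervals of $\mathcal Q$ (in particular a lemma evaluating $\sum_{(C,D)\subseteq(X,Y)\subseteq(A,B)}(-1)^{\abs{Y\setminus X}}$, which is the heart of its forward direction) and then carries out a second, longer chain of summation interchanges for the converse. You instead (i) verify only the single composition $M\circ Z=\mathrm{id}$ and obtain $Z\circ M=\mathrm{id}$ for free from finite-dimensionality of the space of functions on $\mathcal Q$ --- a legitimate shortcut that eliminates the paper's entire second computation --- and (ii) pre-simplify \eqref{eq:mobius2} by collapsing the outer sum over $X$, arriving at the closed form
\[
g(A,B)=\sum_{\substack{(C,D)\in\mathcal Q\\ C\subseteq A\subseteq D\subseteq B}}(-1)^{\abs{B\setminus A}-\abs{D\setminus C}}f(C,D),
\]
which in effect exhibits the M\"obius function of the lattice $(\mathcal Q,\subseteq)$ explicitly. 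I checked this reduction (the conditions $C\subseteq A\setminus X$ and $D\subseteq B\setminus X$ do collapse to $X\subseteq A\setminus D$ since $C\subseteq D$) and the subsequent coefficient computation (the constraint $C\subseteq A\subseteq D$ makes the $C$- and $D$-sums decouple, and $E=A$, $A\cup F=B$, $E\subseteq F$ force $(E,F)=(A,B)$ with total sign $(-1)^{2\abs{B}}=1$); both are sound. The trade-off: your route is shorter, needs only the basic cancellation identity (the paper's Lemmas \ref{lemma21}--\ref{lemma22}) rather than the two-dimensional interval lemmas, and makes the structure of the inverse transparent; the paper's route is self-contained in both directions and its auxiliary lemmas document the combinatorics of $\mathcal Q$ in more detail. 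If you write this up, state the finite-dimensionality argument explicitly (injective endomorphisms of finite-dimensional spaces are invertible, hence a one-sided inverse is two-sided), since that single sentence is what licenses skipping the converse.
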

\begin{proof}
See Appendix
\end{proof}
\begin{rem}
By setting for all $X\subseteq A\subseteq N$ and for all $(A,B)\in\mathcal Q$ 
\[
g^*(A\setminus X,B\setminus X)=\sum_{(C,D)\subseteq (A\setminus X,B\setminus X)}{\left(-1\right)^{\abs{B\setminus A}-\abs{D\setminus C}}f(C,D)},
\]
thus equation \eqref{eq:mobius2} can be rewritten as 
\[
g(A,B)=\sum_{\emptyset\subseteq X\subseteq A}{\left(-1\right)^{\abs{X}}g^*(A\setminus X,B\setminus X)}.
\]
\end{rem}
\begin{rem}
Let us apply proposition \ref{pro:mobius} to $\mathcal Q_0=\{(\emptyset, B))\ |\ B\subseteq N\}\subseteq \mathcal Q$ which we identify with $2^N$. 
Thus we obtain the well known result, applied to functions $f,g:2^N\rightarrow \rea$,  
\begin{equation}
f(B)=\sum_{D\subseteq B}g(D) \qquad\textnormal{for all}\quad B\in 2^N
\label{eq:mobus10}
\end{equation}
if and only if 
\begin{equation}
g(B)=
\sum_{D\subseteq B}\left(-1\right)^{|B\setminus D|}f(D) \qquad\textnormal{for all}\quad B\in 2^N.
\label{eq:mobius20}
\end{equation}
\end{rem}
\noindent The first of the two following propositions characterizes an interval capacity by means of its M{\"o}bius inverse. 
The second one allows the RCI with respect to an interval capacity to be rewritten using the M{\"o}bius inverse of such an interval capacity.

\begin{pro}
$\mu_r:\mathcal Q\rightarrow \rea$ is an interval capacity if and only if 
its M{\"o}bius inverse $\mu_r:\mathcal Q\rightarrow \rea$ satisfies:
\begin{enumerate}
	\item $m\left(\emptyset,\emptyset\right)=0$;
	\item $\sum_{(A,B)\in\mathcal Q}{m(A,B)}=1$;
	\item $\sum_{\left\{a\right\}\subseteq C\subseteq A}{\sum_{C\subseteq D\subseteq B}}{m(C,D)}\geq 0$,  
	  $\forall a\in A\subseteq B \in 2^N$;
	\item $\sum_{\left\{b\right\}\subseteq D\subseteq B}{\sum_{C\subseteq A\cap D}}{m(C,D)}\geq 0$,  
	  $\forall b\in B\supseteq A \in 2^N$.
\end{enumerate}
\label{pro:mobius1}
\end{pro}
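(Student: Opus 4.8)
The plan is to verify the four conditions by translating each defining property of an interval capacity (Definition \ref{def:interval capacity}) through the Möbius relation of Proposition \ref{pro:mobius}, namely $\mu_r(A,B)=\sum_{(C,D)\subseteq(A,B)}m(C,D)$, where I write $m$ for the Möbius inverse.

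First I would dispatch the two boundary conditions. Since $(\emptyset,\emptyset)$ is the least element of $\mathcal Q$, the Möbius relation gives $\mu_r(\emptyset,\emptyset)=m(\emptyset,\emptyset)$, so $\mu_r(\emptyset,\emptyset)=0$ is equivalent to condition $1$. Similarly, since every element of $\mathcal Q$ is contained in the top element $(N,N)$, we get $\mu_r(N,N)=\sum_{(A,B)\in\mathcal Q}m(A,B)$, so $\mu_r(N,N)=1$ is equivalent to condition $2$.

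The substance is the monotonicity of $\mu_r$, which I would reduce to the cover relations of the lattice $(\mathcal Q,\subseteq)$. There are exactly two elementary ways to move up by a single step while staying in $\mathcal Q$: (i) enlarge the first component, $(A,B)\to(A\cup\{a\},B)$ with $a\in B\setminus A$, and (ii) enlarge the second component, $(A,B)\to(A,B\cup\{b\})$ with $b\in N\setminus B$. Any comparison $(A,B)\subseteq(C,D)$ can be realized by a chain of such single-element steps — first raise the second component from $B$ to $D$, then raise the first component from $A$ to $C$; every element added in the second phase lies in $D\supseteq C$, so each intermediate pair remains in $\mathcal Q$. Hence monotonicity of $\mu_r$ on all of $\mathcal Q$ holds if and only if $\mu_r$ is nondecreasing across every step of type (i) and every step of type (ii).

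It then remains to express the one-step increments in terms of $m$. For a step of type (i), the increment $\mu_r(A\cup\{a\},B)-\mu_r(A,B)$ collects exactly those $m(C,D)$ with $(C,D)\subseteq(A\cup\{a\},B)$ but $(C,D)\not\subseteq(A,B)$; since $D\subseteq B$ throughout, this forces $a\in C\subseteq A\cup\{a\}$ together with $C\subseteq D\subseteq B$. Writing $\tilde A=A\cup\{a\}$, the increment equals $\sum_{\{a\}\subseteq C\subseteq\tilde A}\sum_{C\subseteq D\subseteq B}m(C,D)$, so nonnegativity of every type-(i) step is precisely condition $3$. For a step of type (ii), the increment $\mu_r(A,B\cup\{b\})-\mu_r(A,B)$ collects those $m(C,D)$ with $C\subseteq A$ and $b\in D\subseteq B\cup\{b\}$; the constraint $C\subseteq D$ then reads $C\subseteq A\cap D$, and writing $\tilde B=B\cup\{b\}$ the increment equals $\sum_{\{b\}\subseteq D\subseteq\tilde B}\sum_{C\subseteq A\cap D}m(C,D)$, so nonnegativity of every type-(ii) step is precisely condition $4$. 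The main obstacle is purely the bookkeeping in these two increment identities: one must verify exactly which pairs $(C,D)$ survive in the difference of the two Möbius down-set sums and then reindex the surviving range to match the stated double sums. Once this is done, combining the four equivalences — together with the chain argument that links monotonicity on all of $\mathcal Q$ to nonnegativity of every single-step increment — yields the proposition in both directions.
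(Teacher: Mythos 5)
Your overall strategy coincides with the paper's: conditions~1 and~2 come from evaluating the M\"obius relation at the bottom and top of $\mathcal Q$, and monotonicity of $\mu_r$ is reduced to nonnegativity of one-element increments, each of which is then identified with a double sum of M\"obius coefficients. Your handling of conditions~1, 2 and~3 is correct, and the chain argument (raise the second component from $B$ to $D$ first, then the first component from $A$ to $C$) is valid.

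There is, however, a genuine mismatch in your treatment of condition~4. Your type-(ii) elementary step is $(A,B)\to(A,B\cup\{b\})$ with $b\in N\setminus B$; after relabelling $\tilde B=B\cup\{b\}$ its increment gives exactly the instances of condition~4 with $b\in\tilde B\setminus A$. But condition~4 is asserted for \emph{every} $b\in B$, including $b\in A$, and those instances do not correspond to any single step of either of your two types; so the claim that ``nonnegativity of every type-(ii) step is precisely condition~4'' is false as stated, and in the forward direction (capacity $\Rightarrow$ conditions) you have only established condition~4 for $b\notin A$. (The backward direction is unaffected, since the full condition~4 contains the instances your chain argument actually uses.) The paper sidesteps this by taking as its second elementary move the simultaneous step $(A\setminus\{b\},B\setminus\{b\})\to(A,B)$ for $b\in B$: the pairs $(C,D)\subseteq(A,B)$ with $(C,D)\not\subseteq(A\setminus\{b\},B\setminus\{b\})$ are exactly those with $b\in D$ (because $C\subseteq D$), which reproduces the condition-4 sum for arbitrary $b\in B$. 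Your argument can be repaired either by adopting that identification, or by checking that for $b\in A$ the condition-4 sum at $(A,B)$ equals the condition-4 sum at $(A\setminus\{b\},B)$ plus the condition-3 sum at $(A,B)$ with $a=b$, both of which you have already shown nonnegative; but as written the forward implication for condition~4 is incomplete.
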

\begin{proof}
See Appendix
\end{proof}

\begin{pro}
Let $\mu_r:\mathcal Q \rightarrow [0,1]$ be an interval capacity and let $m:\mathcal Q \rightarrow [0,1]$ 
be its M{\"o}bius inverse, then for all $\x\in\mathcal I^n$ 
\begin{equation}
Ch_r(\x,\mu_r)=\sum_{(A,B)\in\mathcal Q}
m(A,B)\bigwedge\left\{\underset{i\in A}{\bigwedge}\lf_i, \underset{i\in B}{\bigwedge}\rg_i\right\}.
\label{eq:mobius3}
\end{equation}
\label{pro:mobius2}
\end{pro}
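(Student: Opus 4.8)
The plan is to decompose $\mu_r$ into the elementary ``unanimity'' functions furnished by its M\"obius inverse and then to exploit the fact that $Ch_r(\x,\cdot)$ depends affinely on its capacity argument. For each $(C,D)\in\mathcal Q$ introduce $\zeta_{(C,D)}:\mathcal Q\to\{0,1\}$ with $\zeta_{(C,D)}(A,B)=1$ exactly when $(C,D)\subseteq(A,B)$ and $0$ otherwise. Equation \eqref{eq:mobius1} then says precisely that $\mu_r=\sum_{(C,D)\in\mathcal Q}m(C,D)\,\zeta_{(C,D)}$ as functions on $\mathcal Q$. First I would substitute this expansion for $\mu_r$ inside the integrand of \eqref{eq:RCI}; since the level sets $A_t=\{i\in N:\lf_i\ge t\}$ and $B_t=\{i\in N:\rg_i\ge t\}$ satisfy $A_t\subseteq B_t$ (because $\rg_i\ge\lf_i$) and the sum over $\mathcal Q$ is finite, the M\"obius sum may be pulled outside the Riemann integral, reducing the problem to evaluating $\int_{\min_i\lf_i}^{\max_i\rg_i}\zeta_{(C,D)}(A_t,B_t)\,dt$ for each block $(C,D)$.

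The heart of the argument is this single-block computation, which is the exact analogue of the classical identity ``Choquet integral of a unanimity game $=$ minimum.'' By definition $\zeta_{(C,D)}(A_t,B_t)=1$ iff $C\subseteq A_t$ and $D\subseteq B_t$, i.e. iff $\lf_i\ge t$ for all $i\in C$ and $\rg_i\ge t$ for all $i\in D$; equivalently iff $t\le\lambda_{(C,D)}$, where $\lambda_{(C,D)}:=\bigwedge\{\bigwedge_{i\in C}\lf_i,\bigwedge_{i\in D}\rg_i\}$. I would then verify, for $(C,D)\ne(\emptyset,\emptyset)$, the bounds $\min_i\lf_i\le\lambda_{(C,D)}\le\max_i\rg_i$ (using $\lf_i\le\rg_i$ for the left inequality, and $D\ne\emptyset$ for the right one, which holds because $C\subseteq D$ forces $D\ne\emptyset$ once $(C,D)\ne(\emptyset,\emptyset)$). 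Consequently the integrand is the indicator of $[\min_i\lf_i,\lambda_{(C,D)}]$, so the integral equals $\lambda_{(C,D)}-\min_i\lf_i$; equivalently $Ch_r(\x,\zeta_{(C,D)})=\lambda_{(C,D)}$.

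It remains to reassemble. Substituting the block values gives $Ch_r(\x,\mu_r)=\sum_{(C,D)\ne(\emptyset,\emptyset)}m(C,D)\bigl(\lambda_{(C,D)}-\min_i\lf_i\bigr)+\min_i\lf_i$, where the degenerate block $(\emptyset,\emptyset)$ has been dropped since it carries weight $m(\emptyset,\emptyset)=0$. The $-\min_i\lf_i$ terms then contribute $-\min_i\lf_i\sum_{(C,D)\ne(\emptyset,\emptyset)}m(C,D)$, and by conditions $(1)$ and $(2)$ of Proposition \ref{pro:mobius1} we have $m(\emptyset,\emptyset)=0$ and $\sum_{(C,D)\in\mathcal Q}m(C,D)=1$, so this sum equals $1$ and the constants cancel the trailing $+\min_i\lf_i$, leaving exactly $\sum_{(C,D)}m(C,D)\,\lambda_{(C,D)}$, which is formula \eqref{eq:mobius3}. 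I expect the main obstacle to be the bookkeeping around this additive constant: because \eqref{eq:RCI} makes $Ch_r(\x,\cdot)$ affine rather than linear in the capacity, the naive block-by-block identity $Ch_r(\x,\mu_r)=\sum_{(C,D)}m(C,D)\,Ch_r(\x,\zeta_{(C,D)})$ would leave a spurious residue $\bigl(1-\sum_{(C,D)}m(C,D)\bigr)\min_i\lf_i$, and it is precisely the normalization $\sum_{(C,D)}m(C,D)=1$ that annihilates it; the only other delicate point, the block $(\emptyset,\emptyset)$ (whose integral would be $\max_i\rg_i-\min_i\lf_i$ rather than a minimum), is harmless thanks to $m(\emptyset,\emptyset)=0$.
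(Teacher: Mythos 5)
Your argument is correct, but it takes a different computational route from the paper's. The paper works entirely with the discrete formula \eqref{eq:RCI2}: it writes each increment $\mu_r(A_{(i)},B_{(i)})-\mu_r(A_{(i+1)},B_{(i+1)})$ via \eqref{eq:mobius1} as the sum of $m(A,B)$ over those $(A,B)$ contained in $(A_{(i)},B_{(i)})$ but not in $(A_{(i+1)},B_{(i+1)})$, and then regroups: each $(A,B)$ occurs exactly once in the double sum, at the index where $x_{(i)}=\bigwedge\{\bigwedge_{j\in A}\lf_j,\bigwedge_{j\in B}\rg_j\}$. Because \eqref{eq:RCI2} has already absorbed the additive constant of \eqref{eq:RCI} into the telescoping sum (using $\mu_r(N,N)=1$ when passing from \eqref{eq:RCI1} to \eqref{eq:RCI2}), no normalization of $m$ is ever invoked. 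You instead make the unanimity decomposition $\mu_r=\sum_{(C,D)}m(C,D)\,\zeta_{(C,D)}$ explicit, integrate block by block in \eqref{eq:RCI} to get $\bigwedge\{\bigwedge_{i\in C}\lf_i,\bigwedge_{i\in D}\rg_i\}-\min_i\lf_i$ for each nonempty block, and then need $m(\emptyset,\emptyset)=0$ and $\sum_{(C,D)}m(C,D)=1$ from Proposition \ref{pro:mobius1} to kill the residual affine term --- a dependence the paper's version avoids, and which you correctly identify and discharge. What your route buys is transparency: it isolates the key fact that the RCI of a unanimity element of $\mathcal Q$ is the corresponding minimum, checks the boundary inequalities $\min_i\lf_i\le\bigwedge\{\bigwedge_{i\in C}\lf_i,\bigwedge_{i\in D}\rg_i\}\le\max_i\rg_i$ that justify truncating the indicator to the integration interval, and handles the degenerate block $(\emptyset,\emptyset)$ explicitly, all of which the paper's one-line regrouping leaves implicit.
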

\begin{proof}
For all $\x\in\mathcal I^n$,
\begin{eqnarray}
Ch_r(\x,\mu_r)& = &\sum_{i=1}^{2n}{x_{(i)}\left[\mu_r\left(A_{(i)},B_{(i)}\right)-\mu_r\left(A_{(i+1)},B_{(i+1)}\right)\right]}\nonumber=  
\\&=& 
\sum_{i=1}^{2n}x_{(i)}\sum_{(A,B)\subseteq \left(A_{(i)},B_{(i)}\right)\setminus \left(A_{(i+1)},B_{(i+1)}\right)}m(A,B)
=\sum_{(A,B)\in\mathcal Q}
m(A,B)\bigwedge\left\{\underset{i\in A}{\bigwedge}\lf_i, \underset{i\in B}{\bigwedge}\rg_i\right\}.
\end{eqnarray}
\end{proof}
\begin{rem}
Note that the term $\underset{i\in B}{\bigwedge}\rg_i$ can also be written $\underset{i\in B\setminus A}{\bigwedge}\rg_i$ and can have an influence. 
See, e.g., the following example: $N=\{1,2\}$, $(A,B)=(1,12)$, $x=([3,4],[1,2])$.
In this case, by applying the \eqref{eq:mobius3} the term $m\left(\{1\},\{1,2\}\right)$ must be multiplied by $2=\min\{3,4,2\}=\min\{3,2\}$.
\end{rem}
\noindent Using previous proposition the RCI assumes a linear expression with respect to 
the interval-measure.

\begin{cor}
There exist functions $f_{(A,B)}:\rea^n\rightarrow\rea$, $(A,B)\in\mathcal Q$ such that
\begin{equation}
Ch_r(\x,\mu_r)=\sum_{(A,B)\in\mathcal Q}
\mu_r(A,B)f_{(A,B)}(\x).
\label{eq:mobius4}
\end{equation}
\label{cor1}
\end{cor}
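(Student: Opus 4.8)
The plan is to read the corollary off directly from the Möbius representation of Proposition \ref{pro:mobius2}, substituting the explicit inversion formula supplied by Proposition \ref{pro:mobius}. First I would abbreviate the factor in \eqref{eq:mobius3} that depends only on the pair $(A,B)$ and on the vector $\x$, writing
\[
h_{(A,B)}(\x)=\bigwedge\left\{\underset{i\in A}{\bigwedge}\lf_i,\ \underset{i\in B}{\bigwedge}\rg_i\right\},
\]
so that Proposition \ref{pro:mobius2} becomes $Ch_r(\x,\mu_r)=\sum_{(A,B)\in\mathcal Q}m(A,B)\,h_{(A,B)}(\x)$, where $m$ is the Möbius inverse of $\mu_r$. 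The essential observation is that $h_{(A,B)}$ does not involve the interval capacity at all: it is a fixed real-valued function of $\x$ attached to each $(A,B)\in\mathcal Q$.

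Next I would invoke Proposition \ref{pro:mobius}, whose closed form \eqref{eq:mobius2} (applied with $f=\mu_r$ and $g=m$) expresses each value $m(A,B)$ as a linear combination of the values $\mu_r(C,D)$, with coefficients that are fixed integers built from the signs $(-1)^{\abs{X}}$ and $(-1)^{\abs{B\setminus A}-\abs{D\setminus C}}$ and that depend only on the pairs $(A,B)$ and $(C,D)$, not on $\mu_r$ nor on $\x$. Denoting these coefficients by $c_{(A,B),(C,D)}$, we have $m(A,B)=\sum_{(C,D)\in\mathcal Q}c_{(A,B),(C,D)}\,\mu_r(C,D)$.

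Substituting this into the formula for $Ch_r(\x,\mu_r)$ and interchanging the two finite sums gives
\[
Ch_r(\x,\mu_r)=\sum_{(A,B)\in\mathcal Q}h_{(A,B)}(\x)\sum_{(C,D)\in\mathcal Q}c_{(A,B),(C,D)}\,\mu_r(C,D)
=\sum_{(C,D)\in\mathcal Q}\mu_r(C,D)\sum_{(A,B)\in\mathcal Q}c_{(A,B),(C,D)}\,h_{(A,B)}(\x).
\]
Setting $f_{(C,D)}(\x)=\sum_{(A,B)\in\mathcal Q}c_{(A,B),(C,D)}\,h_{(A,B)}(\x)$ then produces exactly \eqref{eq:mobius4}, and by construction each $f_{(C,D)}$ depends only on $\x$ and on $(C,D)$, not on the chosen interval capacity.

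I do not expect any substantial obstacle: since $\mathcal Q$ is finite, the interchange of sums is automatic, and Möbius inversion is a fixed linear bijection on the finite-dimensional space of functions $\mathcal Q\to\rea$, so the $m(A,B)$ are genuinely linear in the $\mu_r(C,D)$ with $\x$-independent coefficients. The only point worth stating carefully is that the coefficients $c_{(A,B),(C,D)}$ read off from \eqref{eq:mobius2} are independent of $\mu_r$, which is immediate from the closed form; this is precisely what makes the resulting expression linear in the interval-measure, as the corollary asserts.
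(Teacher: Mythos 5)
Your proposal is correct and follows essentially the same route as the paper: substitute the closed-form Möbius inversion \eqref{eq:mobius2} (with $f=\mu_r$, $g=m$) into the representation \eqref{eq:mobius3} and interchange the two finite sums over $\mathcal Q$. The only difference is presentational — the paper writes out the resulting coefficients $f_{(A,B)}(\x)$ explicitly, whereas you leave them as the abstract linear combination $\sum_{(A,B)}c_{(A,B),(C,D)}\,h_{(A,B)}(\x)$, which suffices for the existence claim.
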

\begin{proof}
Indeed, using the \eqref{eq:mobius2}  
\begin{equation}
m(A,B)=
\sum_{\emptyset\subseteq X\subseteq A}
{
\left(-1\right)^{\abs{X}}
\hspace{-10mm}\sum_{\substack{(C,D)\in\mathcal Q \\(C,D)\subseteq (A\setminus X,B\setminus X)}}{\hspace{-8mm}\left(-1\right)^{\abs{B\setminus A}-\abs{D\setminus C}}\mu_r(C,D)}
} 
\qquad\textnormal{for all}\quad (A,B)\in\mathcal Q.
\label{}
\end{equation}
in the \eqref{eq:mobius3}, the \eqref{eq:mobius4} is verified with 
\begin{equation}
f_{(A,B)}(\x)=\sum_{\emptyset\subseteq X\subseteq N\setminus A}\left(-1\right)^{|X|}
\sum_{(A\cup X,B\cup X)  }\left(-1\right)^{|B\setminus A|}\bigwedge\left\{\underset{i\in A\cup X}{\bigwedge}\lf_i, \underset{i\in B\cup X}{\bigwedge}\rg_i\right\}
\label{eq:}
\end{equation}
\end{proof}

\section{The robust Sugeno and Shilkret integrals}
Let us consider a set of criteria $N=\left\{1,2,\ldots,n\right\}$ and a set of alternatives
$A=\left\{\x,\y,\z,\ldots\right\}$ to be evaluated, on each criterion, on the scale $[0,1]$. 
Thus each $\x\in A$ can be identified with a score vector $\x=\left(x_1,\ldots,x_n\right)\in\left[0,1\right]^n$,  
whose $i^{th}$ component, $x_i$, represents the evaluation of $\x$ with respect to the $i^{th}$ criterion. 

\begin{Def}
The Sugeno Integral \cite{sugeno1974theory} of 
$\x=\left(x_1,\ldots,x_n\right)\in\left[0,1\right]^n$ 
with respect to the capacity $\nu:2^{[0,1]}\rightarrow [0,1]$ is 
\begin{equation}
S\left(\x,\nu\right)=\bigvee_{i\in N}\bigwedge \left\{x_{(i)}, \nu\left(A_{(i)}\right)\right\},
\label{eq:S1}
\end{equation}
being $(\cdot):N\rightarrow N$ an indexes permutation such that 
$x_{(1)}\leq\ldots\leq x_{(n)}$ and  
$A_{(i)}=\left\{(i),\ldots,(n)\right\}$, $i=1,\ldots,n$.
\end{Def}
\noindent It follows from the definition that 
$S\left(\x,\nu\right)\in\left\{x_1,\ldots,x_n\right\}\bigcup\left\{\nu(A)\ |\ A\subseteq N\right\}$.
Moreover the Sugeno integral can also be computed if the elements of the set 
$S\left(\x,\nu\right)\in\left\{x_1,\ldots,x_n\right\}\bigcup\left\{\nu(A)\ |\ A\subseteq N\right\}$
are just ranked on an ordinal scale. \\
The \eqref{eq:S1} involves $n$ terms but requests a permutation.
An equivalent formulation (see \cite{marichal2001axiomatic}) involves $2^n$ terms but does not request a permutation.
\begin{equation}
S\left(\x,\nu\right)=\bigvee_{A\subseteq N} \bigwedge
\left\{
\nu\left(A\right),\bigwedge_{i\in A}x_i
\right\}.
\label{eq:S2}
\end{equation}
Now, suppose that for every $\x\in A$, we have, on each criterion, a numerical imprecise evaluation on the scale $[0,1]$. 
Specifically, suppose that for each $i\in N$ we know a range $\left[\lf _i,\rg _i\right]\subseteq [0,1]$ containing the exact evaluation of $\x$ with respect to $i$. 
Thus, being 
$\mathcal I_{[0,1]}=\left\{[a,b]\ |\  a,b\in [0,1],\  a\le b\right\}$ the set of bounded and closed subintervals of $[0,1]$,  
any alternative $\x$ can be identified with a score vector  
\begin{equation}
\x=\left(\left[\lf _1,\rg _1\right],\ldots,\ \left[\lf _i,\rg _i\right],\ldots,\left[\lf _n,\rg _n\right]\right)\in\mathcal I_{[0,1]}^n,
\label{eq:vectorr}
\end{equation}
whose $i^{th}$ component, 
$x_i=[\lf_i,\rg_i]$, 
is the interval containing the evaluation of $\x$ with respect to the $i^{th}$ criterion. 
Vectors of $[0,1]^n$ are considered elements of $\mathcal I_{[0,1]}^n$ by identifying each $x\in [0,1]$ with the degenerate interval $[x,x]=\{x\}$.
We associate to every 
$\x=\left(\left[\lf _1,\rg _1\right],\ldots,\left[\lf _n,\rg _n\right]\right)\in \mathcal I^n$ the vector 
$\underline{\x}=\left(\lf_1,\ldots,\lf_n\right)$ 
of all the worst (or pessimistic) evaluations and 
the vector 
$\overline{\x}=\left(\rg_1,\ldots,\rg_n\right)$ 
of all the best (or optimistic) evaluations on each criterion.

\begin{Def}
The Robust Sugeno Integral (RSI) of $\x$ with respect to the interval capacity $\mu_r$ is 

\begin{equation}
S_r\left(\x,\mu_r\right)=\bigvee_{(A,B)\in \mathcal Q} \bigwedge\left\{\underset{i\in A}{\bigwedge}\ \underline{x}_i,\  
\underset{i\in B\minus A}{\bigwedge}\ \overline{x}_i\ ,\ 
\mu_r\left(A,B\right)\right\}.
\label{eq:RSI2}
\end{equation}
\end{Def}
\noindent It follows from the definition that 
$S_r\left(\x,\mu_r\right)\in\left\{\lf_1,\ldots,\lf_n\right\}\bigcup\left\{\rg_1,\ldots,\rg_n\right\}\bigcup\left\{\mu_r(A,B)\ |\ (A,B)\in \mathcal Q\right\}$.
Moreover the RSI can also be computed if the elements of this set 
are just ranked on an ordinal scale. \\
\noindent The \eqref{eq:RSI2} involves $|\mathcal Q|=3^n$ terms. 
An alternative formulation of the RSI implies some additional notations. 
We identify every vector  
$\x=\left(\left[\lf _1,\rg _1\right],\ldots,\left[\lf _n,\rg _n\right]\right)\in \mathcal I_{[0,1]}^n$ 
with the vector 
$\x^*=(x_1,\ldots,x_{2n})\in [0,1]^{2n}$ 
defined according to \eqref{eq:ident}.  
Let 
$(\cdot):\left\{1,\ldots,2n\right\}\rightarrow \left\{1,\ldots,2n\right\}$
be a permutation of indices such that  
$x_{(1)}\leq x_{(2)}\leq\ldots\leq x_{(2n)}$
and for all $i=1,\ldots,2n$ let us define 
$A_{(i)}=\left\{j\in N\ |\ \lf_j\geq x_{(i)}\right\}$ and 
$B_{(i)}=\left\{j\in N\ |\ \rg_j\geq x_{(i)}\right\}$.
Thus, the RSI of $\x$ with respect to the interval capacity $\mu_r$ is:
\begin{equation}
S_r\left(\x,\mu_r\right)=\bigvee_{i\in \left\{1,\ldots , 2n\right\}} \bigwedge\left\{x_{(i)}, \mu_r\left(A_{(i)},B_{(i)}\right)\right\}.
\label{eq:RSI1}
\end{equation}
This requests  $2n$ terms and an indices permutation.\\
We close this subsection with two illustrative examples. 
The first just shows the equivalence of formulation \eqref{eq:RSI1} and \eqref{eq:RSI2}, the scale $[0,1]$ is substituted with the scale $[0,10]$.
The second is applied to a problem of students evaluations and the scale $[0,1]$ is substituted with the scale $[0,30]$.\\ 
\textbf{Example 1}\\
\noindent Let us suppose that $N=\left\{1,2\right\}$ and consider $\x=\left(\left[5,9\right],\left[2, 4\right]\right)$. 
Let be given the following interval capacity on $\mathcal Q$:
\[
\mu_r\left(\emptyset,\emptyset\right)=0, \ 
\mu_r\left(\emptyset,\left\{1\right\}\right)=3, \
\mu_r\left(\emptyset,\left\{2\right\}\right)=2, \
\mu_r\left(\emptyset,N\right)=5, \
\mu_r\left(\left\{1\right\},\left\{1\right\}\right)=4, \]
\[
\mu_r\left(\left\{1\right\},N\right)=6, \
\mu_r\left(\left\{2\right\},\left\{2\right\}\right)=4, \
\mu_r\left(\left\{2\right\},N\right)=7, \
\mu_r\left(N,N\right)=10.
\]
It follows that 
\[
\mu_r\left(A_2,B_2\right)=\mu_r\left(N,N\right)=1,\ 
\mu_r\left(A_4,B_4\right)=\mu_r\left(\left\{1\right\},N\right)=6,\]
\[
\mu_r\left(A_5,B_5\right)=\mu_r\left(\left\{1\right\},\left\{1\right\}\right)=5,\ 
\mu_r\left(A_9,B_9\right)=\mu_r\left(\emptyset,\left\{1\right\}\right)=3.
\]
By using the \eqref{eq:RSI1} we get
\[
S_r\left(\x,\mu_r\right)=\max\left\{\min\left\{2,10\right\}, \min\left\{4,6\right\},\min\left\{5,4\right\}, \min\left\{9,3\right\}\right\}=\max\left\{2,4,4,3\right\}=4.
\]
Alternatively, we can use the \eqref{eq:RSI1}
\[
S_r\left(\x,\mu_r\right)=\max\left\{0,\min\left\{3,9\right\}, \min\left\{2,4\right\}, 
\min\left\{5,4\right\},
\min\left\{4,5,9\right\}, 
\right.
\]
\[\left., \min\left\{6,5,4\right\}, 
\min\left\{4,2,4\right\},
\min\left\{7,2,4\right\},
\min\left\{10,2,4\right\}
\right\}=4.
\]
\textbf{Example 2}\\
Suppose we need to evaluate a university student in four economic subjects, $N=\left\{m_1,m_2,m_3,m_4\right\}$ of which  $\left\{m_1,m_2\right\}$ belong to the subcategory of microeconomic.
We suppose that the student is evaluated on each subject by a 30 point scale, allowing interval (imprecise) evaluations. 
Let us consider the vector $E(Student)=E(S)$ containing the single evaluation in each subject $E(m_i)$:
\begin{equation}
E(S)=\left(E(m_1),E(m_2)\, E(m_3), E(m_4)\right)=
\left(\left[26,30\right],\left[28,30\right],\left[24,27\right],\left[23,27\right]\right)
\label{eq:exsug1}
\end{equation}
In order to compute the RSI of $E(S)$ we have to specify some values of an interval capacity defined on $\mathcal Q$. 
For example the following:
\begin{eqnarray}
\mu_r\left(N,N\right)=30,\ 
\mu_r\left(\left\{m_1,m_2,m_3\right\},N\right)=29,\ 
\mu_r\left(\left\{m_1,m_2\right\},N\right)=28,\ \nonumber\\
\mu_r\left(\left\{m_2\right\},N\right)=24,\ 
\mu_r\left(\left\{m_2\right\},\left\{m_1,m_2\right\}\right)=23
\mu_r\left(\emptyset,\left\{m_1,m_2\right\}\right)=20.
\label{eq:exsug2}
\end{eqnarray}
These weights reflect the fact that we retain the microeconomic subcategory $\left\{m_1,m_2\right\}$ particularly important. 
Indeed when $\left\{m_1,m_2\right\}$ is not included on $A$ the weight assigned to $(A,B)$ is small.
The question is: how much should be globally evaluated the student in accordance with the partials evaluations \eqref{eq:exsug1} and \eqref{eq:exsug2}?
Using the RSI, equation \eqref{eq:RSI2}, such a student should be evaluated 
$$S_r\left(S,\mu_r\right)= \bigvee\left\{\bigwedge\left\{23,30\right\},\ \bigwedge\left\{24,29\right\},
\ \bigwedge\left\{26,28\right\}\ \bigwedge\left\{27,24\right\}\ \bigwedge\left\{28,23\right\}\ \bigwedge\left\{30,20\right\}\right\}=26.$$
In this case we cannot assign a greater evaluation, due to the pessimistic evaluation of the student in the relevant subject $m_1$.\\
For nonnegative valued alternative, another famous integral useful to aggregate criteria evaluations is the Shilkret integral \cite{shilkret1971maxitive}.  
\begin{Def}
The  Shilkret integral \cite{shilkret1971maxitive} of a vector $\textbf{\textit{x}}=\left(x_1, \ldots, x_n\right)\in \left[0,1\right]^n$ with respect to the capacity $\nu$ is given by
\begin{equation}
Sh(\textbf{\textit{x}},\nu)=\bigvee_{i \in N}\left\{x_i\cdot\nu(\{j \in N: x_j \ge x_i\}\right\}.
\label{shilkret}
\end{equation}
\end{Def}
\noindent For interval evaluations on the criteria, the Shilkret integral can be computed with respect to an interval capacity. 
Let us define 
$\mathcal I{[0,1]}^n=\left\{[a,b]\ |\  a,b\in\rea,\  0\le a\le b\le 1\right\}$ thus we have the following

\begin{Def}
The robust Shilkret integral of $\x\in\mathcal I_{[0,1]}^n$ with respect to the interval capacity $\mu_r$ is  
\[
Sh_r(x,\mu_r)=\underset{(A,B)\in \mathcal Q}{\bigvee}\left\{\bigwedge\left\{\underset{i\in A}{\bigwedge}\lf_i, \underset{i\in B}{\bigwedge}\rg_i\right\}\cdot\mu_r(A,B)\right\}.
\]
\end{Def}

\section{Other robust integrals}
What we have done regarding the Choquet, Shilkret and the Sugeno integrals can be extended to other integrals. 
Recently, in the context of multiple criteria decision analysis, the literature on fuzzy integrals has increased very fast. 
An interesting line of research is that of bipolar fuzzy integrals: the bipolar Choquet integral has been proposed in \cite{grabisch2005biI,grabisch2005bi,greco2002bipolar} and the bipolar Shilkret and 
Sugeno integrals have been proposed in \cite{greco2012FSS}. 
Here we propose the generalization of the bipolar Choquet integral to the case of interval evaluations.   
Let us consider the set  
$$
\mathcal Q_b=
\left\{\left(A^+,B^+,A^-,B^-\right)\ |\ A^+\subseteq B^+\subseteq N,\  N\supseteq A^-\supseteq B^-\ \textnormal{and}\ B^+\cap A^-=\emptyset\right\}.
$$ 

\begin{Def} 
A function $\mu^b_r: \mathcal Q_b \rightarrow [-1,1]$ is a \textit{bipolar interval-capacity} on $\mathcal Q_b$  if
\begin{itemize}
	\item $\mu_r^b(\emptyset,\emptyset,\emptyset,  \emptyset)=0$, $\mu_r^b(N,N,\emptyset,\emptyset)=1$ and $\mu_r^b(\emptyset,\emptyset,N,N)=-1$;
	\item $\mu_r^b\left(A_1^+,B_1^+,A_1^-,B_1^-\right) \leq \mu_r^b\left(A_2^+,B:_2^+,A_2^-,B_2^-\right)$ for all 
		$\left(A_1^+,B_1^+,A_1^-,B_1^-\right), \left(A_2^+,B:_2^+,A_2^-,B_2^-\right) \in \mathcal Q_b$  such that 
		$A_1^+\subseteq A_2^+$, $B_1^+\subseteq B_2^+$, $A_1^-\supseteq A_2^-$ and $B_1^-\supseteq B_2^-$.
\end{itemize}
\label{def:bipolar interval capacity}
\end{Def}

\begin{Def}
The bipolar Robust Choquet Integral (bRCI) of 
$\x=\left(\left[\lf _1,\rg _1\right],\ldots,\left[\lf _n,\rg _n\right]\right)\in \mathcal I^n$ with respect to a bipolar interval capacity $\mu_r^b:2^N\rightarrow [0,1]$ is given by:
\begin{equation}
Ch_r^b\left(\x,\mu_rb\right)=: 
\int_{-\infty }^\infty \mu_r^G(\{i\ |\  \lf_i > t\} ,\{i \  |\ \rg_i> t \}, \{i\ |\  \lf_i <-t \} ,\{i \  |\ \rg_i<-t  \})dt.
\label{eq:bRCI}
\end{equation}
\end{Def}
A further generalization in the field of fuzzy integrals is that of level dependent integrals.
This line of research has lead to the definition of the level dependent Choquet integral and the bipolar level dependent Choquet integral \cite{greco2011choquet}, the level dependent Shilkret integral \cite{bodjanova2009sugeno}, the level dependent Sugeno integral \cite{mesiar2009level}. 
In \cite{greco2011choquet} the \textit{generalized Choquet integral} is defined with respect to a level dependent capacity. 
Also the RCI can be generalized in this sense. 
\begin{Def}
Let $(\alpha,\beta)\subseteq\rea$ be any possible interval of the real line. 
A generalized interval capacity is a function $\mu_r^G:\mathcal Q\times(\alpha,\beta)\rightarrow[0,1]$ such that
\begin{enumerate}
	\item for all $t\in(\alpha,\beta)$ and $(A,B)\subseteq(C,D)\in\mathcal Q$, 
	$\mu_r^G\left(\left(A,B\right),t\right)\leq\mu_r^G\left(\left(C,D\right),t\right)$
	\item for all $t\in(\alpha,\beta),\ \mu_r^G\left(\left(\emptyset,\emptyset\right),t\right)=0$ and 
	$\mu_r^G\left(\left(N,N\right),t\right)=1$
	\item  for all $(A,B)\in\mathcal Q,\ \mu_r^G\left(\left(A,B\right),t\right)$ considered as a function with respect to $t$ is Lebesgue measurable.
\end{enumerate}
\end{Def}
\begin{Def}
The generalized Robust Choquet Integral (RCIg) of $\x\in \left(\mathcal I_{(\alpha,\beta)}\right)^n$ with respect to a generalized interval capacity $\mu_r^G:\mathcal Q\times(\alpha,\beta)\rightarrow[0,1]$ is given by:
\begin{equation}
Ch_r^G\left(\x,\mu_r\right)=: 
\int_{\min\left\{\lf_1,\ldots,\lf_n\right\}}^\infty \left(\mu_r^G(\{i \in N\  |\  \lf_i \geq t \},\{i \in N\ |\ \rg_i\geq t \}),t \right)dt\ +\min\left\{\lf_1,\ldots,\lf_n\right\}.
\label{eq:RCIG}
\end{equation}
\[\]
\end{Def}
\noindent The RCIg can be characterized by the following three properties: Idempotency, Monotonicity and Tail Independence (see \cite{green1988ordinal}).
The following example illustrates the Tail independence in the framework of imprecise evaluations.
Let be: 
$\x=\left([1,3],[0,6],[2,3],[4,5]\right)$, 
$\textbf{y}=\left([1,3],[0,4],[2,3],[3,7]\right)$, 
$\textbf{w}=\left([0,2],[1,6],[0,2],[4,5]\right)$, 
$\textbf{z}=\left([0,2],[1,4],[0,2],[3,7]\right)$. 
Then, given the aggregation function $G$, Tail Independence means 
$$G(x)-G(y)=G(w)-G(z).$$
That is the classical tail independence, applied on the interval extremes $\lf_i$ and $\rg_i$ permuted.\\
The last example we wish to provide is the generalization of the Concave Integral, proposed in \cite{lehrer2008concave}. 
Consider the set $\mathcal I_+=\left\{[a,b]\ |\  a,b\in\rea \  0\le a\le b\right\}$.  

\begin{Def}
The Robust Concave Integral of a nonnegative interval valued alternative $\x\in\mathcal I_+^n$ 
with respect to the interval capacity $\mu_r$ is 
\begin{equation}
\int^{cav}{\x d\mu_r}=\bigvee\left\{\sum_{\left(A,B\right)\in\mathcal Q}{\alpha_{(A,B)}\mu_r(A,B)}
\ ;\ 
\sum_{\left(A,B\right)\in\mathcal Q}{\alpha_{(A,B)}\textbf{1}_{(A,B)}}=\x,\ \alpha_{(A,B)}\geq 0
\right\}.
\label{eq:RcavI}
\end{equation}
\end{Def}

\noindent Obviously, if on every criterion $\x$ receives an exact evaluation, thus the \eqref{eq:RcavI} reduces to the Concave Integral of $\x\in\rea_+^n$ with respect to the capacity $\nu(A)=\mu_r(A,A)$.

\section{Generalizing the concept of interval to m-points interval}

In \cite{ozturk2011representing} the concept of interval has been generalized (allowing the presence of more than two points). \\
We can image that on every of the $n$ criteria an alternative $\x$ is evaluated m times, so that this alternative can be identified with a vector of score vectors $\x=(x_1,\ldots,x_n)$ being for all $i=1,\ldots,n$
\[
x_i=
\left(
f_1(x_i),\ldots f_m(x_i)
\right)
\qquad\textnormal{with}\qquad
\ 
f_j(x_i)\leq f_{j+1}(x_i)
\ 
\textnormal{for all}
\ 
j=1,\ldots,m-1.
\]
\noindent For example, the case m=3 corresponds to have on each criterion a pessimistic, a realistic and an optimistic evaluation.\\
The idea to extend the RCI to the case of m-interval based evaluation is simple.
Let us define
\[
\mathcal Q_m=\left\{(A_1,\ldots,A_m)\ |\ A_1\subseteq A_2\ldots\subseteq A_m\subseteq N\right\}.
\]
\begin{Def}
An m-interval capacity is a function $\mu_m:\mathcal Q_m\rightarrow [0,1]$ such that
\begin{itemize}
	\item $\mu_m(\emptyset,\ldots,\emptyset)=0$,
	\item $\mu_m(N,\ldots,N)=1$,
	\item $\mu_m(A_1,\ldots,A_m)\leq\mu_m(B_1,\ldots,B_m)$, whenever $A_i\subseteq B_i\subseteq N,\ \forall i=1,\ldots,m$.
\end{itemize}
\end{Def}

\begin{Def}
The Robust Choquet Integral of $\x$ (m-points interval valued) w.r.t. the m-interval capacity $\mu_m$ is 
\small{\begin{equation}
%%Ch_r(\x,\mu_m)
Ch_r(\x,\mu_m)=\int_{\min_if_1(x_i)}^{\max_if_m(x_i)}
\mu_m
\left(
\{
j\in N\ |\ f_1(x_j)\geq t
\},
\ldots,\{j\in N\ |\ f_m(x_j)\geq t\}dt
\right)
+\min_if_1(x_i).
\label{eq:opop}
\end{equation}
}
\end{Def}

\section{Conclusions}
In this paper we have faced the question regarding the aggregation of interval evaluations of an alternative on various criteria into a single overall evaluation. 
To this scope we have introduced the concept of interval capacity which allows for a quite natural generalizations of the classical Choquet Shilkret and Sugeno integrals to the case of interval evaluations. 
We called these generalizations robust integrals. 
Our analysis shows that, when the interval evaluations collapse into exact evaluations, our definitions of robust integrals collapse into the original definitions.
Situations where we meet imprecise evaluations are very common in the real life (we have provided realistic examples), so the aim of this paper is to cover the existing gap in the literature for the aggregations of such data.

\newpage

\section{Appendix}\label{appendix}
In order to prove proposition \ref{pro:mobius}, we need some preliminary lemmas. 
\subsection{Preliminary lemmas}
The following two lemmas have been proved in \cite{shafer1976mathematical} (see also \cite{chateauneuf1989some})
\begin{lem}
If $A$ is a finite set then
\begin{equation}
\sum_{B\subseteq A}{\left(-1\right)^{|B|}}=\left\{\begin{array}{ccl}
1& & \textit{if  } A=\emptyset\\
0 && \textit{otherwise.}
\end{array}\right.
\label{}
\end{equation}
\label{lemma21}
\end{lem}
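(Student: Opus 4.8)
The statement to prove is the classical binomial identity
\[
\sum_{B\subseteq A}(-1)^{|B|}=\begin{cases}1 & A=\emptyset\\ 0 & \text{otherwise.}\end{cases}
\]

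The plan is to group the subsets $B\subseteq A$ by their cardinality and count them. If $A=\emptyset$ the only subset is $B=\emptyset$ itself, which contributes $(-1)^0=1$, so the sum equals $1$; this disposes of the first case immediately. For the second case, suppose $|A|=n\ge 1$. Since the summand $(-1)^{|B|}$ depends only on the size $|B|$, and the number of subsets of $A$ having exactly $k$ elements is $\binom{n}{k}$, I would rewrite the sum as
\[
\sum_{B\subseteq A}(-1)^{|B|}=\sum_{k=0}^{n}\binom{n}{k}(-1)^{k}.
\]

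The key step is then to recognize the right-hand side as an instance of the binomial theorem: expanding $(1+x)^n=\sum_{k=0}^n \binom{n}{k}x^k$ and substituting $x=-1$ gives $(1+(-1))^n=0^n=0$, which holds precisely because $n\ge 1$. This identifies the sum with $0$ in the second case and completes the argument.

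There is no serious obstacle here; the only point requiring a moment's care is that the vanishing relies on $n\ge 1$ (so that $0^n=0$), which is exactly why the case $A=\emptyset$ must be separated out and handled by direct inspection rather than by the binomial expansion. An alternative, self-contained route avoiding explicit reference to the binomial theorem would be an involution argument: fix any element $a\in A$ (possible since $A\neq\emptyset$) and pair each subset $B$ with $B\triangle\{a\}$; this pairing is a fixed-point-free involution that flips $|B|$ by one, hence changes the sign of the summand, so the terms cancel in pairs and the total is $0$. I would present the binomial-theorem version as the main proof for brevity, since it is shortest and most transparent.
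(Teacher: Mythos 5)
Your proof is correct and complete: the case split on $A=\emptyset$ versus $|A|=n\ge 1$, the reduction to $\sum_{k=0}^{n}\binom{n}{k}(-1)^{k}=(1-1)^{n}=0$, and the remark that the vanishing needs $n\ge 1$ are all exactly right, and the involution via $B\mapsto B\,\triangle\,\{a\}$ is a valid alternative. Note that the paper does not actually prove this lemma; it simply cites Shafer's \emph{A mathematical theory of evidence} (and Chateauneuf--Jaffray) for it, so there is no in-paper argument to compare against --- your binomial-theorem derivation is the standard one and fills that gap correctly.
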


\begin{lem}
If $A$ is a finite set and $B\subseteq A$ then
\begin{equation}
\sum_{B\subseteq C\subseteq A}{\left(-1\right)^{|C|}}=\left\{\begin{array}{ccl}
\left(-1\right)^{|A|}& & \textit{if  } A=B\\
0 && \textit{otherwise.}
\end{array}\right.
\label{}
\end{equation}
\label{lemma22}
\end{lem}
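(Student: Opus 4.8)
The plan is to reduce this identity to Lemma~\ref{lemma21} by a change of summation variable. First I would observe that every set $C$ satisfying $B\subseteq C\subseteq A$ decomposes uniquely as $C=B\cup S$ with $S\subseteq A\setminus B$, and that the correspondence $S\mapsto B\cup S$ is a bijection between the power set of $A\setminus B$ and the collection $\{C\ |\ B\subseteq C\subseteq A\}$. Since $B$ and $A\setminus B$ are disjoint, cardinalities add over this union, so $|C|=|B|+|S|$.

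Substituting this decomposition into the sum and pulling the constant factor $(-1)^{|B|}$ out of the summation would give
\begin{equation}
\sum_{B\subseteq C\subseteq A}(-1)^{|C|}=\sum_{S\subseteq A\setminus B}(-1)^{|B|+|S|}=(-1)^{|B|}\sum_{S\subseteq A\setminus B}(-1)^{|S|}.
\end{equation}
The remaining inner sum is exactly the quantity evaluated in Lemma~\ref{lemma21}, now applied to the finite set $A\setminus B$. Hence it equals $1$ when $A\setminus B=\emptyset$, that is when $A=B$, and $0$ otherwise. In the first case the whole expression collapses to $(-1)^{|B|}=(-1)^{|A|}$; in the second case it vanishes. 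This is precisely the claimed piecewise formula.

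There is no genuine obstacle here, and indeed the statement is quoted as already known from \cite{shafer1976mathematical}. The only point requiring any care is to check that $S\mapsto B\cup S$ is a bijection onto the index set of the original sum and that the cardinality splits additively across the disjoint union $C=B\sqcup S$; once this is verified the conclusion follows immediately from the preceding lemma without further computation.
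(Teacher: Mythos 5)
Your proof is correct. The paper itself gives no argument for this lemma---it is simply quoted as known from \cite{shafer1976mathematical}---and your reduction via the bijection $S\mapsto B\cup S$ between subsets of $A\setminus B$ and the sets $C$ with $B\subseteq C\subseteq A$, followed by an application of Lemma~\ref{lemma21} to $A\setminus B$, is the standard and complete way to derive it.
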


\noindent With these results we are able to prove the following additional lemmas

\begin{lem}
For all $(A,B)\in\mathcal Q$ 
\begin{equation}
\sum_{
\substack{
(C,D)\in\mathcal Q \\ 
(C,D)\subseteq (A,B)
}
}{\left(-1\right)^{|D|}}=\left\{\begin{array}{ccl}
\left(-1\right)^{|B|}& & \textit{if  } A=B\\
0 && \textit{otherwise.}
\end{array}\right.
\label{eq:ui}
\end{equation}
\label{lemma21*}
\end{lem}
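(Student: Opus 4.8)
The plan is to carry out the inner summation over $C$ first, so as to reduce the statement to the elementary alternating-sum Lemma \ref{lemma21}. First I would observe that the summand $(-1)^{|D|}$ does not depend on $C$; hence, for each fixed $D\subseteq B$, all admissible pairs with that second component contribute $(-1)^{|D|}$ times the number of admissible first components $C$. The constraints on $C$ are $C\subseteq A$ (from $(C,D)\subseteq(A,B)$) together with $C\subseteq D$ (from $(C,D)\in\mathcal Q$), i.e. $C\subseteq A\cap D$, so there are exactly $2^{|A\cap D|}$ such $C$. Therefore the double sum collapses to
$$
\sum_{D\subseteq B}(-1)^{|D|}\,2^{|A\cap D|}.
$$

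Next I would exploit the inclusion $A\subseteq B$, which holds because $(A,B)\in\mathcal Q$. Every $D\subseteq B$ decomposes uniquely as $D=D_1\cup D_2$ with $D_1=D\cap A\subseteq A$ and $D_2=D\cap(B\setminus A)\subseteq B\setminus A$, where the two parts are disjoint. Since $|D|=|D_1|+|D_2|$ and $A\cap D=D_1$, the sum factors as
$$
\left(\sum_{D_1\subseteq A}(-2)^{|D_1|}\right)\left(\sum_{D_2\subseteq B\setminus A}(-1)^{|D_2|}\right).
$$
By the binomial theorem the first factor equals $(1-2)^{|A|}=(-1)^{|A|}$, while by Lemma \ref{lemma21} the second factor equals $1$ when $B\setminus A=\emptyset$, i.e. $A=B$, and $0$ otherwise. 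Multiplying, the product is $(-1)^{|A|}=(-1)^{|B|}$ in the case $A=B$ and $0$ in all remaining cases, which is precisely the asserted value.

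The computation is essentially routine, and most of the work lies in the two bookkeeping reductions above. The one step that genuinely requires care is the factorization: it relies on $A\subseteq B$ to ensure that $D$ splits independently across $A$ and $B\setminus A$. Without this inclusion the two index ranges would overlap, the exponents $|D|$ and $|A\cap D|$ would not separate additively, and the sum would fail to factor into the clean product that makes Lemma \ref{lemma21} applicable; so I would flag the use of $(A,B)\in\mathcal Q$ explicitly at that point.
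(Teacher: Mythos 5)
Your proof is correct, but it takes a different route from the paper's. The paper fixes the first component $C=X\subseteq A$ and parametrizes $D=X\cup Y$ with $Y\subseteq B\setminus X$, so that the inner sum $\sum_{Y\subseteq B\setminus X}(-1)^{|Y|}$ is annihilated by Lemma \ref{lemma21} unless $B\setminus X=\emptyset$, which (since $X\subseteq A\subseteq B$) forces $X=A=B$ and leaves the single surviving term $(-1)^{|B|}$. You instead sum out $C$ first, obtaining the weight $2^{|A\cap D|}$ for each $D$, and then evaluate $\sum_{D\subseteq B}(-1)^{|D|}2^{|A\cap D|}$ by splitting $D$ independently across $A$ and $B\setminus A$ and combining the binomial identity $(1-2)^{|A|}=(-1)^{|A|}$ with Lemma \ref{lemma21}. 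Your version is arguably cleaner and makes the role of the hypothesis $A\subseteq B$ fully explicit; it also avoids the bookkeeping of the factor $(-1)^{|X|}$, which the paper's displayed computation handles somewhat loosely. What the paper's ``fix $C$, sum over the excess $Y=D\setminus C$'' template buys is reusability: it is exactly the pattern applied again in Lemmas \ref{lemma22*} and \ref{lemma22+}, where the sum is bounded below by a nontrivial pair $(C,D)$ and your product factorization would need to be reworked.
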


\begin{proof}
\[
\sum_{
\substack{
(C,D)\in\mathcal Q \\ 
(C,D)\subseteq (A,B)
}
}{\left(-1\right)^{|D|}}=\sum_{X\subseteq A}\sum_{Y\subseteq B\setminus X}\left(-1\right)^{|Y|}=\textnormal{lemma \ref{lemma21}}=
\left(-1\right)^{|A|}\sum_{Y\subseteq B\setminus X}\left(-1\right)^{|Y|}
+
\sum_{X\subseteq A}\sum_{Y\subset B\setminus X}\left(-1\right)^{|Y|}=
\]
\[
=
\left(-1\right)^{|A|}\sum_{Y\subseteq B\setminus X}\left(-1\right)^{|Y|}
=\textnormal{lemma \ref{lemma21}}=
\left\{\begin{array}{ccl}
\left(-1\right)^{|B|}& & \textit{if  } A=B\\
0 && \textit{otherwise.}
\end{array}\right.
\]
\end{proof}

\begin{rem}
If $A=\emptyset$ then lemma \ref{lemma21*} coincides with lemma \ref{lemma21}.
\end{rem}

\begin{cor}
For all $(A,B)\in\mathcal Q$ 
\begin{equation}
\sum_{
\substack{
(C,D)\in\mathcal Q \\ 
(C,D)\subseteq (A,B)
}
}{\left(-1\right)^{|B\setminus D|}}=\left\{\begin{array}{ccl}
1& & \textit{if  } A=B\\
0 && \textit{otherwise.}
\end{array}\right.
\label{eq:cor21*}
\end{equation}
\label{cor21*}
\end{cor}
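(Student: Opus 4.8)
The plan is to obtain this as an immediate corollary of Lemma \ref{lemma21*}, since the two sums differ only by a sign that is constant over the whole range of summation. The key observation is that the summation in \eqref{eq:cor21*} runs over pairs $(C,D)\in\mathcal Q$ with $(C,D)\subseteq(A,B)$, which in particular forces $D\subseteq B$. Hence for every term in the sum we may use $\abs{B\setminus D}=\abs B-\abs D$.

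First I would rewrite the summand by factoring the cardinality difference:
\[
\left(-1\right)^{\abs{B\setminus D}}=\left(-1\right)^{\abs B-\abs D}=\left(-1\right)^{\abs B}\left(-1\right)^{-\abs D}=\left(-1\right)^{\abs B}\left(-1\right)^{\abs D},
\]
where the last equality uses that $(-1)^{-k}=(-1)^{k}$ for every integer $k$. Since $\left(-1\right)^{\abs B}$ does not depend on the index $(C,D)$ of the sum, it can be pulled out of the summation, giving
\[
\sum_{\substack{(C,D)\in\mathcal Q \\ (C,D)\subseteq (A,B)}}\left(-1\right)^{\abs{B\setminus D}}=\left(-1\right)^{\abs B}\sum_{\substack{(C,D)\in\mathcal Q \\ (C,D)\subseteq (A,B)}}\left(-1\right)^{\abs{D}}.
\]

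At this point I would invoke Lemma \ref{lemma21*} directly on the remaining sum. When $A\neq B$ the lemma yields $0$, so the whole expression is $0$. When $A=B$ the lemma yields $\left(-1\right)^{\abs B}$, and multiplying by the prefactor gives $\left(-1\right)^{\abs B}\left(-1\right)^{\abs B}=\left(-1\right)^{2\abs B}=1$, matching the claimed value. This completes the argument.

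There is essentially no substantive obstacle here, as the statement is a one-line consequence of the preceding lemma; the only point requiring a moment of care is the sign bookkeeping $\left(-1\right)^{-\abs D}=\left(-1\right)^{\abs D}$ and the fact that $D\subseteq B$ is guaranteed by the order relation on $\mathcal Q$, which is what legitimizes writing $\abs{B\setminus D}=\abs B-\abs D$.
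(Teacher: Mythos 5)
Your proof is correct and follows exactly the same route as the paper: factor $(-1)^{|B\setminus D|}=(-1)^{|B|}(-1)^{|D|}$ using $D\subseteq B$, pull out the constant $(-1)^{|B|}$, and apply Lemma \ref{lemma21*}. Nothing to add.
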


\begin{proof}
For all $(A,B)\in\mathcal Q$ 
\[
\sum_{
\substack{
(C,D)\in\mathcal Q \\ 
(C,D)\subseteq (A,B)
}
}{\left(-1\right)^{|B\setminus D|}}=
\left(-1\right)^{|B|}\sum_{
\substack{
(C,D)\in\mathcal Q \\ 
(C,D)\subseteq (A,B)
}
}{\left(-1\right)^{|D|}}=
\textnormal{lemma \ref{lemma21*}}=
\left\{\begin{array}{ccl}
1& & \textit{if  } A=B\\
0 && \textit{otherwise.}
\end{array}\right.
\]
\end{proof}

\begin{lem}
Suppose that $(C,D), (A,B)\in\mathcal Q$ with $(C,D)\subseteq(A,B)$, then
\small{\begin{equation}
\sum_{\substack{(X,Y)\in\mathcal Q \\ (C,D)\subseteq (X,Y)\subseteq (A,B)}}
\left(-1\right)^{|Y|}=
\left\{
\begin{array}{cl}
\left(-1\right)^{|B|}|2^{\left(A\cap D\right)\setminus C}|
& \textnormal{if}\quad A\cup D=B\quad\textnormal{i.e}\quad B\setminus \left(A\cup D\right)=\emptyset\\
0 & \textnormal{if}\quad A\cup D\subset B
\end{array}\right.
\label{eq:lemma22*}
\end{equation}}
\label{lemma22*}
\end{lem}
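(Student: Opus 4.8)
The plan is to exchange the order of summation, summing over the first component $X$ on the outside and over the second component $Y$ on the inside, exploiting that the sign $(-1)^{|Y|}$ depends only on $Y$. Since the condition $(X,Y)\in\mathcal Q$ together with $(C,D)\subseteq(X,Y)\subseteq(A,B)$ is equivalent to the three requirements $C\subseteq X\subseteq A$, $D\subseteq Y\subseteq B$ and $X\subseteq Y$, fixing an admissible $X$ (that is, $C\subseteq X\subseteq A$) leaves $Y$ ranging over the sets with $D\subseteq Y\subseteq B$ and $Y\supseteq X$, i.e. over $X\cup D\subseteq Y\subseteq B$. This inner range is well defined because $X\subseteq A\subseteq B$ and $D\subseteq B$ force $X\cup D\subseteq B$.

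First I would evaluate the inner sum $\sum_{X\cup D\subseteq Y\subseteq B}(-1)^{|Y|}$ by a direct appeal to Lemma \ref{lemma22}, applied with lower bound $X\cup D$ and upper bound $B$. This yields $(-1)^{|B|}$ when $X\cup D=B$ and $0$ otherwise. Consequently the whole double sum collapses to $(-1)^{|B|}$ times the number of admissible $X$ satisfying $X\cup D=B$.

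The remaining and main combinatorial step is to count the sets $X$ with $C\subseteq X\subseteq A$ and $X\cup D=B$. Here I would first observe that $X\cup D\subseteq A\cup D\subseteq B$, so $X\cup D=B$ can hold only if $A\cup D=B$; this produces exactly the dichotomy in the statement, giving the value $0$ whenever $A\cup D\subset B$. In the remaining case $A\cup D=B$, I would rewrite the requirement $X\cup D=A\cup D$ with $X\subseteq A$ as the inclusion $A\setminus D\subseteq X$, since precisely the elements of $A\setminus D$ must be contributed by $X$. Combining this with $C\subseteq X\subseteq A$ gives $C\cup(A\setminus D)\subseteq X\subseteq A$, and a short set computation shows $A\setminus\bigl(C\cup(A\setminus D)\bigr)=(A\cap D)\setminus C$. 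Hence the admissible $X$ are in bijection with the subsets of $(A\cap D)\setminus C$, of which there are $|2^{(A\cap D)\setminus C}|$, completing the count.

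I expect the only delicate point to be this last counting argument — specifically, verifying that $X\cup D=B$ simultaneously forces $A\cup D=B$ and pins down the free coordinates of $X$ to be exactly $(A\cap D)\setminus C$. Once the identity $A\setminus\bigl(C\cup(A\setminus D)\bigr)=(A\cap D)\setminus C$ is checked, the claimed value $(-1)^{|B|}|2^{(A\cap D)\setminus C}|$ follows immediately by combining it with the reduction obtained from Lemma \ref{lemma22}.
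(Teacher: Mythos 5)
Your proposal is correct and follows essentially the same route as the paper: both exchange the order of summation so that the inner sum over $Y$ runs over $X\cup D\subseteq Y\subseteq B$, apply Lemma \ref{lemma22} to reduce to counting the sets $X$ with $C\subseteq X\subseteq A$ and $X\cup D=B$, and identify these with the subsets of $(A\cap D)\setminus C$. Your write-up actually spells out the final counting bijection in more detail than the paper does, but the argument is the same.
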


\begin{rem}
If $A=\emptyset$ and considering that $|2^\emptyset|=1$, lemma \ref{lemma22*} reduces to l
\end{rem}

\begin{proof}
For all $(C,D), (A,B)\in\mathcal Q$ with $(C,D)\subseteq(A,B)$,
\[
\sum_{\substack{(X,Y)\in\mathcal Q \\ (C,D)\subseteq (X,Y)\subseteq (A,B)}}
\left(-1\right)^{|Y|}=
\sum_{C\subseteq X\subseteq A}
\sum_{\left(X\cup D\right)\subseteq Y\subseteq B}
\left(-1\right)^{|Y|}=\textnormal{lemma \ref{lemma22}}=
\sum_{\substack{C\subseteq X\subseteq A \\ X\cup D =B }}
\left(-1\right)^{|B|}=
\]
\[
=
\left\{
\begin{array}{cl}
\left(-1\right)^{|B|}|2^{\left(A\cap D\right)\setminus C}|
& \textnormal{if}\quad A\cup D=B\\
0 & \textnormal{if}\quad A\cup D\subset B.
\end{array}\right.
\]
\end{proof}

\begin{lem}
For all $(A,B)\in\mathcal Q$ 
\small{
\begin{equation}
\sum_{
\substack{
(C,D)\in\mathcal Q \\ 
(C,D)\subseteq (A,B)
}
}
\left(-1\right)^{|D|+|C|}=
\sum_{
\substack{
(C,D)\in\mathcal Q \\ 
(C,D)\subseteq (A,B)
}
}
\left(-1\right)^{|D\setminus C|}
=
\left\{
\begin{array}{ll}
0 
& \textnormal{if}\quad A\neq B \ \textnormal{i.e.}\ B\setminus A \neq \emptyset 
\\
1 & \textnormal{if}\quad A= B \ \textnormal{i.e.}\ B\setminus A =\emptyset.
\end{array}\right.
\label{eq:lemma21+}
\end{equation}
}
\label{lemma21+}
\end{lem}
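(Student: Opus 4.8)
The plan is first to collapse the two displayed sums into one, and then to evaluate it by factorizing over the elements of $N$.

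First I would note that the two sums are equal term by term. Every $(C,D)\in\mathcal Q$ satisfies $C\subseteq D$, so $|D\setminus C|=|D|-|C|$ and hence $(-1)^{|D\setminus C|}=(-1)^{|D|-|C|}=(-1)^{|D|+|C|}$, the last equality because $(-1)^{-2|C|}=1$. It therefore suffices to compute
\[
S:=\sum_{\substack{(C,D)\in\mathcal Q\\(C,D)\subseteq(A,B)}}(-1)^{|D\setminus C|}.
\]

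The key observation for evaluating $S$ is that a pair $(C,D)$ with $C\subseteq D$, $C\subseteq A$ and $D\subseteq B$ is obtained by assigning, independently for each $i\in N$, a status relative to $(C,D)$, while the weight $(-1)^{|D\setminus C|}$ is multiplicative, each $i\in D\setminus C$ contributing a factor $-1$ and every other element a factor $+1$. Classifying $i$ by whether $i\in A$, $i\in B\setminus A$, or $i\in N\setminus B$ gives three local sums. If $i\in A$, then $i$ may be placed in $C$ (hence in $D$), in $D\setminus C$, or outside $D$, yielding a local factor $1+(-1)+1=1$. If $i\in B\setminus A$, then $i\notin A\supseteq C$ forces $i\notin C$, so $i$ may only be placed in $D\setminus C$ or outside $D$, yielding $(-1)+1=0$. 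If $i\in N\setminus B$, then $i$ lies in neither $C$ nor $D$, yielding the single factor $1$. Consequently $S$ equals the product of these local factors, which is $1$ when $B\setminus A=\emptyset$ and $0$ as soon as $B\setminus A\neq\emptyset$, since one vanishing factor annihilates the product; this is precisely the asserted dichotomy.

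The only point requiring care is the justification that $S$ genuinely splits as a product over $i\in N$, that is, that the constraints $C\subseteq A$, $D\subseteq B$ and $C\subseteq D$ decouple element by element; this is the standard fact that a sum over independent index choices of a multiplicative weight equals the product of the partial sums. If one prefers a route staying within the earlier lemmas, I would instead fix $C\subseteq A$ and sum over $D$ with $C\subseteq D\subseteq B$: writing $S=\sum_{C\subseteq A}(-1)^{|C|}\sum_{C\subseteq D\subseteq B}(-1)^{|D|}$ and invoking Lemma \ref{lemma22}, the inner sum equals $(-1)^{|B|}$ when $C=B$ and $0$ otherwise, so only $C=B$ contributes, and the prefactor $(-1)^{|C|}=(-1)^{|B|}$ then gives $(-1)^{|B|+|B|}=1$. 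Since $C\subseteq A$ is compatible with $C=B$ exactly when $A=B$, this recovers the same value and identifies Lemma \ref{lemma22} as the real engine of the argument.
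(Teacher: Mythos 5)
Your proposal is correct, and in fact contains two complete arguments. Your second, ``alternative'' route is precisely the paper's own proof: factor the sum as $\sum_{C\subseteq A}(-1)^{|C|}\sum_{C\subseteq D\subseteq B}(-1)^{|D|}$, kill the inner sum with Lemma~\ref{lemma22} unless $C=B$, and observe that $C=B$ is reachable only when $A=B$, in which case the surviving term is $(-1)^{|B|}(-1)^{|B|}=1$. Your primary route -- splitting the sum as a product of local factors over the elements of $N$, with each $i\in A$ contributing $1+(-1)+1=1$, each $i\in B\setminus A$ contributing $(-1)+1=0$, and each $i\in N\setminus B$ contributing $1$ -- is genuinely different and, I would say, more illuminating: it is self-contained (it does not lean on Lemmas~\ref{lemma21} or~\ref{lemma22} at all), it makes the dichotomy $B\setminus A=\emptyset$ versus $B\setminus A\neq\emptyset$ transparent as ``one vanishing local factor annihilates the product,'' and the same three-way classification of elements would also dispatch the neighbouring Lemma~\ref{lemma21*} and Corollary~\ref{cor21*} in one stroke. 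What the paper's approach buys instead is consistency with the inductive chain of lemmas in the appendix, where each statement is reduced to the classical M\"obius-type identities of Shafer. The one step you flag yourself -- that the sum over pairs $(C,D)$ with $C\subseteq D$, $C\subseteq A$, $D\subseteq B$ decouples elementwise -- is indeed the only thing needing justification, and it holds because the admissible statuses of distinct elements are chosen independently and the weight $(-1)^{|D\setminus C|}$ is multiplicative over elements; your preliminary reduction of $(-1)^{|D|+|C|}$ to $(-1)^{|D\setminus C|}$ via $C\subseteq D$ is also correct.
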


\begin{rem}
Note that if $A=\emptyset$ lemma \ref{lemma21+} states that for all $(\emptyset ,B)\in\mathcal Q$ 
\begin{equation}
\sum_{
(\emptyset ,D)\subseteq (\emptyset,B)
}
\left(-1\right)^{|D|}=
\left\{
\begin{array}{ll}
0 
& \textnormal{if}\quad B\setminus\emptyset \neq \emptyset 
\\
1 & \textnormal{if}\quad A= B =\emptyset.
\end{array}\right.
\end{equation}
that is lemma \ref{lemma21}.
\end{rem}

\begin{proof}
For all $(A,B)\in\mathcal Q$, 
\small{
\begin{equation}
\sum_{
\substack{
(C,D)\in\mathcal Q \\ 
(C,D)\subseteq (A,B)
}
}
\left(-1\right)^{|D|+|C|}=
\sum_{C\subseteq A}
\left(-1\right)^{|C|}
\sum_{C\subseteq D\subseteq B}
\left(-1\right)^{|D|}=\textnormal{lemma \ref{lemma22}}
=
\left\{
\begin{array}{ll}
0 
& \textnormal{if}\quad C\subseteq A\subset B 
\\
1 & \textnormal{if}\quad A= B.
\end{array}\right.
\end{equation}
}
Note that if $A=B$
\[
\sum_{C\subseteq B}
\left(-1\right)^{|C|}
\sum_{C\subseteq D\subseteq B}
\left(-1\right)^{|D|}=
\left(-1\right)^{|B|}
\left(-1\right)^{|B|}=1.
\]
\end{proof}

\begin{lem}
Suppose that $(C,D), (A,B)\in\mathcal Q$ with $(C,D)\subseteq(A,B)$, then
\small{\begin{equation}
\sum_{\substack{(X,Y)\in\mathcal Q \\ (C,D)\subseteq (X,Y)\subseteq (A,B)}}
\left(-1\right)^{|X|+|Y|}=
\sum_{\substack{(X,Y)\in\mathcal Q \\ (C,D)\subseteq (X,Y)\subseteq (A,B)}}
\left(-1\right)^{|Y\setminus X|}=
\left\{
\begin{array}{ll}
\left(-1\right)^{|B\setminus A|}=	\left(-1\right)^{|D\setminus C|}
& \textnormal{if}\quad B\setminus A=D\setminus C\\
0 & \textnormal{otherwise.}
\end{array}\right.
\label{eq:lemma22+}
\end{equation}}
\label{lemma22+}
\end{lem}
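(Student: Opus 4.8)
The plan is to mirror the computation used for Lemma~\ref{lemma22*}, but to carry the extra factor $(-1)^{|X|}$ through a \emph{second} application of Lemma~\ref{lemma22}. First I would dispose of the left-hand equality: since every $(X,Y)$ in the index set lies in $\mathcal Q$ we have $X\subseteq Y$, hence $|Y\setminus X|=|Y|-|X|$ and $(-1)^{|X|+|Y|}=(-1)^{|Y\setminus X|}$ term by term, so the two sums coincide and it suffices to evaluate $\sum (-1)^{|X|+|Y|}$.

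Next I would split the index set exactly as in Lemma~\ref{lemma22*}: for $(C,D)\subseteq(X,Y)\subseteq(A,B)$ the requirements $C\subseteq X\subseteq A$ together with $X\subseteq Y$ and $D\subseteq Y\subseteq B$ combine (using $X\cup D\subseteq B$, which holds because $X\subseteq A\subseteq B$ and $D\subseteq B$) into $C\subseteq X\subseteq A$ and $(X\cup D)\subseteq Y\subseteq B$. Thus
\[
\sum_{\substack{(X,Y)\in\mathcal Q \\ (C,D)\subseteq (X,Y)\subseteq (A,B)}}(-1)^{|X|+|Y|}
=\sum_{C\subseteq X\subseteq A}(-1)^{|X|}\sum_{(X\cup D)\subseteq Y\subseteq B}(-1)^{|Y|}.
\]
By Lemma~\ref{lemma22} the inner sum equals $(-1)^{|B|}$ when $X\cup D=B$ and $0$ otherwise, leaving $(-1)^{|B|}\sum_{X}(-1)^{|X|}$ taken over those $X$ with $C\subseteq X\subseteq A$ and $X\cup D=B$.

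The key reduction is to rewrite the constraint $X\cup D=B$. Since $X\cup D\subseteq B$ is automatic, $X\cup D=B$ is equivalent to $B\setminus D\subseteq X$, so the surviving $X$ are precisely those with $L\subseteq X\subseteq A$ where $L:=C\cup(B\setminus D)$. A second application of Lemma~\ref{lemma22} then gives $\sum_{L\subseteq X\subseteq A}(-1)^{|X|}=(-1)^{|A|}$ if $L=A$ and $0$ otherwise (in particular $0$ when $L\not\subseteq A$, where the sum is empty). Hence the whole expression equals $(-1)^{|A|+|B|}$ exactly when $C\cup(B\setminus D)=A$, and $0$ in every other case.

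It remains to match this to the stated form. The main bookkeeping step is the set identity $B\setminus\bigl(C\cup(B\setminus D)\bigr)=D\setminus C$, valid whenever $C\subseteq B$ and $D\subseteq B$ (by De Morgan together with $B\setminus(B\setminus D)=D$); since both $A$ and $C\cup(B\setminus D)$ are subsets of $B$, equal complements within $B$ force equality, so $C\cup(B\setminus D)=A$ holds iff $B\setminus A=D\setminus C$. Finally, because $A\subseteq B$ gives $|A|+|B|\equiv|B\setminus A|\pmod 2$, the nonzero value is $(-1)^{|A|+|B|}=(-1)^{|B\setminus A|}=(-1)^{|D\setminus C|}$, as claimed. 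I expect the only delicate point to be this equivalence of the nonvanishing condition $C\cup(B\setminus D)=A$ with $B\setminus A=D\setminus C$; the rest is a direct double use of Lemma~\ref{lemma22}.
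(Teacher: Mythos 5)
Your proposal is correct and follows essentially the same route as the paper: both reduce the sum to $\sum_{C\subseteq X\subseteq A}(-1)^{|X|}\sum_{X\cup D\subseteq Y\subseteq B}(-1)^{|Y|}$, kill the inner sum with Lemma~\ref{lemma22}, and then evaluate the surviving alternating sum over $X$ (your second use of Lemma~\ref{lemma22} with $L=C\cup(B\setminus D)$ is the same computation the paper performs by reparametrizing $X$ and invoking Lemma~\ref{lemma21}). The only difference is cosmetic bookkeeping in identifying the nonvanishing condition with $B\setminus A=D\setminus C$, which you handle correctly by complementation within $B$.
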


\begin{proof}
Let us suppose that $(C,D), (A,B)\in\mathcal Q$ with $(C,D)\subseteq(A,B)$, thus 

\[
\sum_{
\substack{
(X,Y)\in\mathcal Q \\ 
(C,D)\subseteq (X,Y)\subseteq (A,B)
}
}
\left(-1\right)^{|X|+|Y|}=
\sum_{C\subseteq X\subseteq A}
\left(-1\right)^{|X|}
\sum_{D\cup X \subseteq Y\subseteq B}
\left(-1\right)^{|Y|}=\textnormal{lemma \ref{lemma22}}
=
\]
\[
=
\left\{
\begin{array}{cl}
0 & \textnormal{if}\quad A\cup D\subset B \left(\textnormal{ and then }D\cup X\subset B \textnormal{ for all } X\subseteq A\right)
\\
\left(-1\right)^{|B|}\underset{\substack{C\subseteq X\subseteq A \\D\cup X=B }}{\sum}\left(-1\right)^{|X|} & \textnormal{if}\quad A\cup D=B.
\end{array}\right.
\]
Now we further examine the case $A\cup D=B$. 
\[
\sum_{
\substack{
(X,Y)\in\mathcal Q \\ 
(C,D)\subseteq (X,Y)\subseteq (A,B)
}
}
\left(-1\right)^{|X|+|Y|}=
\left(-1\right)^{|B|}\underset{\substack{C\subseteq X\subseteq A \\D\cup X=B }}{\sum}\left(-1\right)^{|X|} 
=
\left(-1\right)^{|B|}\sum_{X'\subseteq (A\cap D)\setminus C}\left(-1\right)^{|C\cup (A\setminus D)|}\left(-1\right)^{|X'|}=
\]
\[
=\left(-1\right)^{|B| + |C| + |A\setminus D|}
\sum_{X'\subseteq (A\cap D)\setminus C}\left(-1\right)^{|X'|}
=(\textnormal{lemma \ref{lemma21})}
=
\left\{
\begin{array}{cl}
\left(-1\right)^{|B| + |C| + |A\setminus D|}& \textnormal{if}\quad C=A\cap D
\\
0 & \textnormal{if}\quad C\neq A\cap D.
\end{array}\right.
\]
Thus we have proved that 
\[
\sum_{
\substack{
(X,Y)\in\mathcal Q \\ 
(C,D)\subseteq (X,Y)\subseteq (A,B)
}
}
\left(-1\right)^{|X|+|Y|}=
\left\{
\begin{array}{cl}
\left(-1\right)^{|B| + |C| + |A\setminus D|}=
\left(-1\right)^{|B\setminus A|}=
\left(-1\right)^{|D\setminus C|}
& \textnormal{if}\quad D\cup A=B \textnormal{ and } D\cap A =C
\\
0 & \textnormal{otherwise.}
\end{array}\right.
\]
To complete the proof we show that 
$B\setminus A=D\setminus C$ iff ($A\cap B=C$ and $A\cup D=B$). 
Indeed if ($A\cap B=C$ and $A\cup D=B$) thus 
$B\setminus A= (D\cup A)\setminus A=D\setminus A= D\setminus (D\cap A)= D\setminus C$. 
Now suppose that $B\setminus A=D\setminus C$. 
If $D\cup A\neq B$, it exists $x*\in B\setminus (A\cup D)$ then $x*\in B\setminus A$ and $x*\notin D\setminus C$ and we get the contradiction that $B\setminus A \neq D\setminus C$. 
If $A\cap D\neq C$ it exists $y*\in (A\cap D)\setminus C$ and in this case 
$y*\in D\setminus C$ and $y*\notin B\setminus A$ contradicting the hypothesis that $B\setminus A=D\setminus C$.
\end{proof}

\begin{proof} of proposition \ref{pro:mobius}.\\
\eqref{eq:mobius1} $\rightarrow$ \eqref{eq:mobius2}. 
For all $(A,B)\in\mathcal Q$,
\[
\sum_{\emptyset\subseteq X\subseteq A}
\left[
\left(-1\right)^{\abs{X}}{
\sum_{\substack{(C,D)\in\mathcal Q \\(C,D)\subseteq (A\setminus X,B\setminus X)}}\left(\left(-1\right)^{\abs{B\setminus A}-\abs{D\setminus C}}f(C,D)\right)
} \right]=
\]
\[
=\left(-1\right)^{\abs{B\setminus A}}
\sum_{\emptyset\subseteq X\subseteq A}
\left[
\left(-1\right)^{\abs{X}}
\sum_{\substack{(C,D)\in\mathcal Q \\(C,D)\subseteq (A\setminus X,B\setminus X)}}\left(\left(-1\right)^{\abs{D\setminus C}}f(C,D)\right)
\right]= \  \eqref{eq:mobius1}
\]
\[
=
\left(-1\right)^{\abs{B\setminus A}}
\sum_{\emptyset\subseteq X\subseteq A}
\left[
\left(-1\right)^{\abs{X}}
\sum_{\substack{(C,D)\in\mathcal Q \\(C,D)\subseteq (A\setminus X,B\setminus X)}}\left(\left(-1\right)^{\abs{D\setminus C}}\sum_{\substack{(T,Z)\in\mathcal Q \\(T,Z)\subseteq (C,D)}}g(T,Z)\right)
\right]=\ \textnormal{first inversion}
\]
\[
=
\left(-1\right)^{\abs{B\setminus A}}
\sum_{\emptyset\subseteq X\subseteq A}
\left[\left(-1\right)^{\abs{X}}
\sum_{\substack{(C,D)\in\mathcal Q \\(C,D)\subseteq (A\setminus X,B\setminus X)}}
\left(g(C,D)
\sum_{\substack{(T,Z)\in\mathcal Q \\(C,D)\subseteq (T,Z)\subseteq (A\setminus X,B\setminus X)}}
\left(-1\right)^{\abs{Z\setminus T}}\right)\right]
=\ \textnormal{lemma \ref{lemma22+}}
\]
\[=
\left(-1\right)^{\abs{B\setminus A}}
\sum_{\emptyset\subseteq X\subseteq A}
\left[
\left(-1\right)^{\abs{X}}
\sum_{
\substack{
(C,D)\in\mathcal Q 
\\(C,D)\subseteq (A\setminus X,B\setminus X)
\\D\setminus C= (B\setminus X)\setminus (A\setminus X)=B\setminus A}
}
\left(g(C,D)\left(-1\right)^{|B\setminus A|}\right)
\right]=
\]
\[
=
\sum_{\emptyset\subseteq X\subseteq A}
\left[\left(-1\right)^{\abs{X}}
\sum_{
\substack{
(C,D)\in\mathcal Q 
\\(C,D)\subseteq (A\setminus X,B\setminus X)
\\D\setminus C= (B\setminus X)\setminus (A\setminus X)=B\setminus A}
}
g(C,D)\right]
=\sum_{\emptyset\subseteq X\subseteq A}
\left[\left(-1\right)^{\abs{X}}
\sum_{
\substack{
(C,D)\in\mathcal Q 
\\(C,D)\subseteq (A\setminus X,B\setminus X)
\\D\cap (A\setminus X)=C
\\D\cup (A\setminus X) =B\setminus X}
}
g(C,D)\right]\]
\[
=
\sum_{\emptyset\subseteq X\subseteq A}
\left[
\left(-1\right)^{\abs{X}}
\sum_{
\substack{
(C,D)\in\mathcal Q 
\\(C,D)\subseteq (A\setminus X,B\setminus X)
\\D\setminus C= B\setminus A}
}
g(C,D)
\right]=
\sum_{\emptyset\subseteq X\subseteq A}
\left[
\left(-1\right)^{\abs{X}}
\sum_{
C\subseteq A\setminus X}
g(C,C\cup (B\setminus A))
\right]=
\]
\[
=\ \textnormal{(second inversion)}\ =
\sum_{\emptyset\subseteq X\subseteq A}
\left[
g(X,X\cup (B\setminus A))
\sum_{
Y\subseteq A\setminus X}
\left(-1\right)^{\abs{Y}}
\right]=
\]
\[
\ \textnormal{(\ref{lemma21})}\ 
=
g(A,A\cup (B\setminus A))=g(A,B).
\]
\eqref{eq:mobius2} $\rightarrow$ \eqref{eq:mobius1}. 
For all $(A,B)\in\mathcal Q$, 
\[
\sum_{\substack{(C,D)\in\mathcal Q\\(C,D)\subseteq (A,B)}}g(C,D)=
\sum_{\substack{(C,D)\in\mathcal Q\\(C,D)\subseteq (A,B)}}
\left[
\sum_{\emptyset\subseteq X\subseteq C}
\left(-1\right)^{|X|}g^*(C\setminus X, D\setminus X)
\right]=
\]
\[
=\sum_{\substack{(C,D)\in\mathcal Q\\(C,D)\subseteq (A,B)}}
\left[
\sum_{\emptyset\subseteq X\subseteq C}
\left(
\left(-1\right)^{|X|}
\sum_{\substack{(T,Z)\in\mathcal Q\\(T,Z)\subseteq (C\setminus X, D\setminus X)}}
\left(-1\right)^{|(D\setminus X)\setminus (C\setminus X)|-|Z\setminus T|}f(T,Z)
\right)
\right]=
\]
\[
=
\sum_{\substack{(C,D)\in\mathcal Q\\(C,D)\subseteq (A,B)}}
\left[
\sum_{\emptyset\subseteq X\subseteq C}
\left(
\left(-1\right)^{|X|}
\sum_{\substack{(T,Z)\in\mathcal Q\\(T,Z)\subseteq (C\setminus X, D\setminus X)}}
\left(-1\right)^{|D\setminus C|-|Z\setminus T|}f(T,Z)
\right)
\right]
=
\]
\[
=
\sum_{\substack{(C,D)\in\mathcal Q\\(C,D)\subseteq (A,B)}}
\left[
\left(-1\right)^{|D\setminus C|}
\sum_{\emptyset\subseteq X\subseteq C}
\left(
\left(-1\right)^{|X|}
\sum_{\substack{(T,Z)\in\mathcal Q\\(T,Z)\subseteq (C\setminus X, D\setminus X)}}
\left(-1\right)^{|Z\setminus T|}f(T,Z)
\right)
\right]
=\ \textnormal{(first inversion)}
\]
\[
=
\sum_{\substack{(C,D)\in\mathcal Q\\(C,D)\subseteq (A,B)}}
\left[
\left(-1\right)^{|D\setminus C|}
\sum_{\substack{(T,Z)\in\mathcal Q\\(T,Z)\subseteq (C, D)}}
\left(
\left(-1\right)^{|Z\setminus T|}f(T,Z)
\sum_{\emptyset\subseteq X\subseteq C\setminus Z}
\left(-1\right)^{|X|}
\right)
\right]
=\ \textnormal{(lemma \ref{lemma21})}
\]
\[
=
\sum_{\substack{(C,D)\in\mathcal Q\\(C,D)\subseteq (A,B)}}
\left[
\left(-1\right)^{|D\setminus C|}
\sum_{\substack{(T,Z)\in\mathcal Q\\(T,Z)\subseteq (C, D)\\C\setminus Z=\emptyset}}
\left(-1\right)^{|Z\setminus T|}f(T,Z)
\right]
=
\]
\[
=
\sum_{\substack{(C,D)\in\mathcal Q\\(C,D)\subseteq (A,B)}}
\left[
\left(-1\right)^{|D\setminus C|}
\sum_{\emptyset\subseteq X\subseteq C}
\left(
\sum_{C\subseteq Y\subseteq D}
\left(-1\right)^{|Y\setminus X|}f(X,Y)
\right)
\right]
=
\]
\[
=
\sum_{\substack{(C,D)\in\mathcal Q\\(C,D)\subseteq (A,B)}}
\left[
\left(-1\right)^{|D\setminus C|}f(C,D)
\sum_{C\subseteq X\subseteq D\cap A}
\left(
\sum_{D\subseteq Y\subseteq B}
\left(-1\right)^{|Y\setminus X|}
\right)
\right]
=\ \textnormal{(being $X\subseteq D\cap A\subseteq D\subseteq Y$)}
\]
\[
=
\sum_{\substack{(C,D)\in\mathcal Q\\(C,D)\subseteq (A,B)}}
\left[
\left(-1\right)^{|D\setminus C|}f(C,D)
\sum_{C\subseteq X\subseteq D\cap A}
\left(
\left(-1\right)^{|X|}
\sum_{D\subseteq Y\subseteq B}
\left(-1\right)^{|Y|}
\right)
\right]
=\ \textnormal{(lemma \ref{lemma21})}
\]
\[
=
\sum_{\substack{(C,B)\in\mathcal Q\\(C,B)\subseteq (A,B)}}
\left[
\left(-1\right)^{|B\setminus C|}f(C,B)
\sum_{C\subseteq X\subseteq A}
\left(-1\right)^{|X|}
\left(-1\right)^{|B|}
\right]
= 
\]
\[
=\left[\left(-1\right)^{|B|}
\right]^2
\sum_{C\subseteq A}
\left[
\left(-1\right)^{|C|}f(C,B)
\sum_{C\subseteq X\subseteq A}
\left(-1\right)^{|X|}
\right]
=\ \textnormal{(lemma \ref{lemma21})}\ 
=
\left(-1\right)^{|A|}f(A,B)\left(-1\right)^{|A|}=f(A,B).
\]
\end{proof}

\begin{proof} of proposition \ref{pro:mobius1}.\\
1) and 2) follow directly by the conditions 
$$\mu_r(\emptyset,\emptyset)=0,\quad \mu_r(N,N)=1,\quad \textnormal{and} 
\quad \mu_r(A,B)=\underset{{(C,D)\subseteq (A,B)}}{\sum}m(C,D).$$
To prove 3) and 4) it is sufficient to note that for any function $f:\mathcal Q\rightarrow\rea$ and for all $(A,B),(C,D)\in\mathcal Q$, the monotonicity condition
\begin{equation}
f(C,D)\le f(A,B)\quad\textnormal{whenever}\quad (C,D)\subseteq(A,B) 
\label{eq:mon}
\end{equation} 
is equivalent to the following two statements
\begin{equation}
f(A\setminus \{a\},B)\le f(A,B)\quad\textnormal{for all}\quad a\in A 
\label{eq:i}
\end{equation}
and 
\begin{equation}
f(A\setminus \{b\},B\setminus \{b\})\le f(A,B)\quad\textnormal{for all}\quad b\in B. 
\label{eq:ii}
\end{equation}
\eqref{eq:mon} trivially imply \eqref{eq:i} and \eqref{eq:ii}.
Suppose that $(C,D)\subseteq(A,B)$ and note that $C\subseteq A\cap D$. 
By using respectively \eqref{eq:i} and \eqref{eq:ii}, we get:
\[
f(C,D)\le f(A\cap D, D)= f(A\setminus (B\setminus D), B\setminus (B\setminus D))\le f(A,B).
\]
\end{proof}

\end{document}